\documentclass[12pt]{amsart}
\usepackage{times}
\usepackage{amsfonts}
\usepackage{amssymb}
\usepackage{bbm}
\usepackage{times}
\usepackage{amssymb}
\usepackage{amscd}
\usepackage{graphicx}

\usepackage{amsmath}
\usepackage{amssymb}

\usepackage{amsmath}
\usepackage{amsfonts}
\usepackage{amscd}
\usepackage{latexsym}
\usepackage{amsthm}
\usepackage{amssymb}
\usepackage{amsmath}
\usepackage{amssymb, enumerate}

\usepackage{amsmath}
\usepackage{amssymb}
\usepackage{amsthm}

\theoremstyle{plain}
\newtheorem{theorem}{Theorem}[section]

\newtheorem{lemma}[theorem]{Lemma}

\newtheorem{definition}[theorem]{Definition}

\begin{document}

\title[Extension of $C^*$-algebras] {certain Cuntz semigroup properties of extension C*-algebras}
\author{Qingzhai Fan}
\address{Qingzhai Fan\\ Department of Mathematics\\  Shanghai Maritime University\\
Shanghai\\China
\\  201306 }
\email{qzfan@shmtu.edu.cn}

\thanks{{\bf Key words}  ${\rm C^*}$-algebras,   pure C*-algebra,  Cuntz semigroup.}

\thanks{2020 \emph{Mathematics Subject Classification.} 46L35, 46L05, 46L80}

\begin{abstract}
Let $0 \xrightarrow{}I \xrightarrow{ } A \xrightarrow{  }A/I
 \xrightarrow{} 0$ be a short exact sequence of C*-algebras.
  In this paper,   we prove that if $\dim({\rm Cu}(I))\leq n$ and $\dim({\rm Cu}(A/I))\leq m$, then $\dim({\rm Cu}(A))\leq n+m+1$,  this result gives an affirmative answer to a problem concerning the  Cu-semigroup of C*-algebras raised by  Thiel and  Vilalta.  We further show that certain comparison and divisibility properties satisfied by  $I$ and $A/I$ are inherited by the  extension algebra $A$. As an application, we provide an alternative proof of equivalence that $A$ is pure if and only if $I$ and $A/I$, a result  originally proven by  Perera,  Thiel, and Vilalta.
\end{abstract}

\maketitle

 \section{Introduction}

Pure ${\rm C^*}$-algebras defined as those satisfying almost divisibility  and strict comparison, were first  introduced by Winter in \cite{W3} to characterize a  class of ${\rm C^*}$-algebras with distinct structural properties and classification theoetic features. In the same work, Winter proved  that pure ${\rm C^*}$-algebras with locally  finite nuclear dimension are $\mathcal{Z}$-stable.  Both almost divisibility and strict comparison are fundamental properties describing the  order structure of Cuntz semigroups, and they play crucial roles in the  classification problems for C*-algebras.

 Pureness is  closely
related to $\mathcal{Z}$-stability:  R{\o}rdam in \cite{RM}  showed that $\mathcal{Z}$-stability implies pureness in full generality. Since pureness lies between strict comparison and $\mathcal{Z}$-stability, the
Toms-Winter conjecture in particular predicts that pureness and $\mathcal{Z}$-stability coincide in this setting.
More broadly, the nonsimple Toms-Winter conjecture (cf. \cite{APTV}) proposes that a separable, nowhere scattered, nuclear ${\rm C^*}$-algebra is $\mathcal{Z}$-stable if and only if it is pure. This has been verified in some cases, for example in \cite{APTV}  and \cite{RT}.

Thiel and  Vilalta introduced the concept of the covering dimension of Cuntz semigroups and established a dimension theory for them (c.f. \cite{HV} and \cite{HV1}). The covering dimension of a Cuntz semigroup of a ${\rm C^*}$-algebra is closely related to its nuclear dimension and topological dimension, bridging the gap between the order theory of Cuntz semigroups and the geometric structure of ${\rm C^*}$-algebras.

The property  of $n$-comparison  was  introduced by Winter in \cite{W3}, while the property  of $m$-almost divisibility  was  introduced by Robert and Tikuisis in \cite{RT}. In \cite{APRTHV3} and \cite{APTV},
 Antoine, Perera,  Robert,  Thiel, and Vilalta established the  $(n,m)$  purity is equivalent to purity in the original sense i.e, almost divisibility  and strict comparison.

The controlled comparison and functionally divisible were introduced by Antoine, Perera,  Thiel and  Vilalta in \cite{APTV}. Functionally divisibility is  a weakly divisible property than $n$-almost divisible.
The same authors also proof that  $m$-comparison satisfies controlled comparison; Controlled comparison together with  functional divisibility implies pureness.

 Fang, Liang, and the author study certain properties of the Cuntz semigroup for quasidiagonal extensions of \(C^*\)-algebras in \cite{LF} and \cite{FFL}.

Let $0 \xrightarrow{}I \xrightarrow{ } A \xrightarrow{  }A/I
 \xrightarrow{} 0$ be a short exact sequence of C*-algebras.
  In this paper,   we prove that if $\dim({\rm Cu}(I))\leq n$ and $\dim({\rm Cu}(A/I))\leq m$, then $\dim({\rm Cu}(A))\leq n+m+1$,  this result gives an affirmative answer to a problem concerning the  Cu-semigroup of C*-algebras raised by  Thiel and  Vilalta in  \cite{HV1}.

In this paper,  we also  prove that if both $I$ and $A/I$ satisfy certain comparison and dibidibility  properties, then the same properties are inherited by $A$. Partial related results were previously established by Perera,  Thiel, and Vilalta
in \cite{PTV}, while we provide an alternative proof in this work.

 As an application of our main results and  Theorem 5.6  proposed by  Seth and  Vilalta in \cite{SV3}, we present a new proof for the equivalence that $A$ is pure if and only if $I$ and $A/I$. This equivalent was  originally proven by  Perera,  Thiel, and Vilalta in \cite{PTV}.

Let $
  0 \xrightarrow{}I \xrightarrow{
 } A \xrightarrow{  }A/I
 \xrightarrow{} 0$ be a short exact extension. Our main results are summarized as follows:

  $(1)$ If $\dim({\rm Cu}(I))\leq n$ and $\dim({\rm Cu}(A/I))\leq m$, then $\dim({\rm Cu}(A))\leq n+m+1$.

 $(1)$ If both  $I$  and $A/I$ possess the  $m$-almost divisible property,  then $A$ has
  weakly ($2$-$m$)-almost divisible.

  $(2)$ If $I$ has the $m$-comparison   and $A/I$ has the $n$-comparison, then $A$ admits the $m+n+1$-comparison.

 $(3)$ If $I$ has weakly  $m_1$-divisible property  and $A/I$ has  weakly  $m_2$-divisible property, then $A$ has
  weakly $m_1+m_2$-divisible.

 $(4)$ If $I$ has weakly  $m_1$-comparison   and $A/I$ has  weakly  $m_2$-comparison property, then $A$ has the
  weakly $m_1+m_2$-comparison.

\section{Preliminaries and definitions}

 Let $A$ be a ${\rm C^*}$-algebra. Given $a, b\in A_+,$
one says  that $a$ is Cuntz subequivalent to $b$ (written $a\precsim b$) if there is a sequence $(v_n)_{n=1}^\infty$
of elements of $A$ such that $$\lim_{n\to \infty}\|v_nbv_n^*-a\|=0.$$
One says that $a$ and $b$ are Cuntz equivalent (written $a\sim b$) if $a\precsim b$ and $b\precsim a$. (See \cite{CJ}.). We  shall write $[a]$ for the equivalence class of $a$.

Let $A$ be a ${\rm C^*}$-algebra.
Suppose $a, b\in {\rm M}_{\infty}(A)_+$. Then $a\in {\rm M}_n(A)_+$ and $b\in {\rm M}_m(A)_+$ for some $n, m\in \mathbb{N}$.
we say that $a$ is  Cuntz subequivalent to $b$  and write  $a\precsim b$ if $a\oplus 0_{\max\{n-m,0\}}\precsim b\oplus 0_{\max\{m-n,0\}}$ as elements in
 ${\rm M}_{\max\{n,m\}}(A)_+$ where $0_n$ is the zero element of ${\rm M}_n(A)_+$.

We define ${\rm W}(A):=M_{\infty}(A)_+/\sim$.

The object ${\rm Cu}(A):={(A\otimes {{\mathcal{K}}}})_+/\sim$
 will be called the Cuntz semigroup of $A$. (See \cite{CEI}.)   ${\rm Cu}(A)$  and ${\rm W}(A)$  become  ordered  semigroups   when equipped with the addition operation
$$[a]+[ b]=[ a \oplus b],$$
  and the order relation
$$[ a]\leq [ b]\Leftrightarrow a\precsim b.$$

More properties of the Cuntz semigroup of C*-algebras  can be found in \cite{APT}, \cite{APRT}, and \cite{GP}.

Given $a$ in ${\rm M}_{\infty}(A)_+$ and $\varepsilon>0,$ we denote by $(a-\varepsilon)_+$ the element of ${\rm C^*}(a)$ corresponding (via the functional calculus) to the function $f(t)={\max (0, t-\varepsilon)},~~ t\in \sigma(a)$. By the functional calculus, it follows in a straightforward manner that $((a-\varepsilon_1)_+-\varepsilon_2)_+=(a-(\varepsilon_1+\varepsilon_2))_+.$

Given any two elements $a, b\in A\otimes \mathcal{K} $, we write $[a]\ll[b]$ if  there exists $\varepsilon>0$ such that $[a]\leq[(b-\varepsilon)_+]$.

Thiel and  Vilalta introduced the concept of the covering dimension of Cu-semigroups and established a dimension theory for them (c.f. \cite{HV1} and \cite{HV2}).

\begin{definition}{\rm ({\rm\cite{HV1}}.)}\label{def:2.1}
Let $S$ be a Cu-semigroup. Given $n\in \mathbb{N}$, we write  $\dim(S)\leq n$,
if whenever $x'\ll x\ll y_1+...+y_r$ in $S$, then there exist $z_{j,k}\in S$ for $j=1, ..., r$ and $k=0, ..., n$ such that

$({\rm i})$ $z_{j,k}\ll y_j$ for each $j$ and $k$;

$({\rm ii})$ $ y\ll \sum_{j,k} z_{j,k}$;

$({\rm iii})$ $\sum_{j=1}^r z_{j,k}\ll x$ for each $k=0,...,n$.

We set $\dim(S)=\infty$ if there exists no $n\in\mathbb{N}$ with $\dim(S)\leq n$. Otherwise, we let $\dim(S)$ be the smallest $n\in\mathbb{N}$ such that $\dim(S)\leq n$. We call $\dim(S)$ the covering dimension of $S$.
\end{definition}

\begin{lemma}\label{lem:2.2}
Let $A$ be a $\mathrm{C}^*$-algebra. Then $\dim(\mathrm{Cu}(A))\leq n$ if and only if for all $[y], [x], [y_1], \dots, [y_r]\in\mathrm{Cu}(A)$ satisfying $[y]\ll[x]\ll[y_1]+\cdots+[y_r]$, for any $\delta>0$ such that $[y]\leq [(x-\delta)_+]$,  there exist $[z_{j,k}]\in\mathrm{Cu}(A)$ for $j=1,\dots,r$ and $k=0,\dots,n$ satisfying the following conditions:
\begin{enumerate}
    \item $[z_{j,k}]\ll [y_j]$ for all $j,k$;
    \item $[(y-\delta)_+]\ll \sum_{j,k}[z_{j,k}]$;
    \item $\sum_{j=1}^r [z_{j,k}]\ll [x]$ for all $k=0,\dots,n$.
\end{enumerate}
\end{lemma}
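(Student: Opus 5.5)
The plan is to prove the two implications separately. In both directions I will use the standard facts about $\mathrm{Cu}(A)$ from \cite{CEI}. First, for $a\in (A\otimes\mathcal{K})_+$ one has $[a]=\sup_{\varepsilon>0}[(a-\varepsilon)_+]$, the supremum of an increasing sequence. Second, the way-below relation of the Cu-semigroup agrees with the relation $\ll$ defined in the preliminaries: $[b]\ll[a]$ if and only if $[b]\leq[(a-\varepsilon)_+]$ for some $\varepsilon>0$. I also read condition (ii) of Definition \ref{def:2.1} as $x'\ll\sum_{j,k}z_{j,k}$; the ``$y$'' there is evidently a typo for $x'$.

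\emph{($\Rightarrow$)} Assume $\dim(\mathrm{Cu}(A))\leq n$. Let $[y]\ll[x]\ll[y_1]+\cdots+[y_r]$ and $\delta>0$ with $[y]\leq[(x-\delta)_+]$. Apply Definition \ref{def:2.1} with $x'=[y]$. This gives $[z_{j,k}]$ satisfying (i) and (iii) of the lemma directly, together with $[y]\ll\sum_{j,k}[z_{j,k}]$. Since $(y-\delta)_+\precsim y$, we get $[(y-\delta)_+]\leq[y]\ll\sum_{j,k}[z_{j,k}]$. Hence (2) holds because $\ll$ is preserved under shrinking the smaller element.

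\emph{($\Leftarrow$)} Assume the condition of the lemma, and let $x'\ll x\ll y_1+\cdots+y_r$ in $\mathrm{Cu}(A)$. Write $x=[a]$ with $a\in(A\otimes\mathcal{K})_+$. Because $x'\ll[a]=\sup_k[(a-1/k)_+]$, there is $\varepsilon>0$ with $x'\leq[(a-\varepsilon)_+]$. Now set $y:=(a-\varepsilon/2)_+$ and $\delta:=\varepsilon/2$, and check the hypotheses of the lemma:
\begin{itemize}
\item $[y]=[(a-\delta)_+]$, so $[y]\leq[(x-\delta)_+]$ trivially (with $x$ represented by $a$);
\item $[y]\ll[a]$, because $[y]\leq[(a-\varepsilon/2)_+]$.
\end{itemize}
The hypothesis then yields $[z_{j,k}]$ with $[z_{j,k}]\ll[y_j]$ and $\sum_j[z_{j,k}]\ll[a]=x$. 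It also gives
\[
[(y-\delta)_+]=[((a-\varepsilon/2)_+-\varepsilon/2)_+]=[(a-\varepsilon)_+]\ll\sum_{j,k}[z_{j,k}],
\]
where the middle equality uses the identity $((a-\varepsilon_1)_+-\varepsilon_2)_+=(a-(\varepsilon_1+\varepsilon_2))_+$. Hence $x'\leq[(a-\varepsilon)_+]\ll\sum_{j,k}[z_{j,k}]$, which is condition (ii) of Definition \ref{def:2.1}. So $\dim(\mathrm{Cu}(A))\leq n$.

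The only delicate point is the converse. There one must pick the representative $y$ and the number $\delta$ so that the cut-down $(y-\delta)_+$ still dominates the given $x'$. This is handled by the choice of $\varepsilon$ coming from compact containment and the functional calculus identity above. Everything else is bookkeeping with the transitivity of $\ll$ and $\leq$ in $\mathrm{Cu}(A)$.
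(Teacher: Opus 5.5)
Your proof is correct and follows essentially the same route as the paper. The forward direction is immediate from $[(y-\delta)_+]\leq[y]$. For the converse, you apply the hypothesis to a cut-down of $x$ so that $(y-\delta)_+$ becomes a smaller cut-down of $x$ that still dominates $x'$: you use $y=(a-\varepsilon/2)_+$ with $\delta=\varepsilon/2$, while the paper uses $(x-\delta/2)_+$ with $\delta/4$ to reach $(x-3\delta/4)_+$.
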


\begin{proof}$\Longrightarrow$ trivial.
Conversely, assume that for any $[y], [x], [y_1], \dots, [y_r]\in\mathrm{Cu}(A)$ with $[y]\ll[x]\ll[y_1]+\cdots+[y_r]$, we need to show there exist $[z_{j,k}]\in\mathrm{Cu}(A)$ ($j=1,\dots,r$, $k=0,\dots,n$) satisfying conditions \begin{enumerate}
    \item $[z_{j,k}]\ll [y_j]$ for all $j,k$;
    \item $[y]\ll \sum_{j,k}[z_{j,k}]$;
    \item $\sum_{j=1}^r [z_{j,k}]\ll [x]$ for all $k=0,\dots,n$.
\end{enumerate}

Since  $[y]\ll [x]$ in $\mathrm{Cu}(A)$, choose $\delta>0$ such that $[y]\leq [(x-\delta)_+]$. Note that $[(x-\delta/2)_+]\ll [x]\ll [y_1]+\cdots+[y_r]$ and $(x-\delta/2)_+ \ll(x-\delta/4)_+$. By assumption, there exist $[z_{j,k}]\in\mathrm{Cu}(A)$ for $j=1,\dots,r$ and $k=0,\dots,n$ such that
\begin{enumerate}
    \item $[z_{j,k}]\ll [y_j]$ for all $j,k$;
    \item $[y]\leq [(x-\delta)_+]\ll[(x-3\delta/4)_+]\ll \sum_{j,k}[z_{j,k}]$;
    \item $\sum_{j=1}^r [z_{j,k}]\ll [x]$ for all $k=0,\dots,n$.
\end{enumerate}
This completes the argument.
\end{proof}

\begin{lemma}\label{lem:2.3}
Let $A$ be a $\mathrm{C}^*$-algebra. Then $\dim(\mathrm{Cu}(A))\leq n$ if and only if    whenever $[y]\ll [x]\ll [y_1]+...+[y_r]$, where $y, x, y_1, \cdots, y_r$ are all in ${\rm M}_\infty(A)_+$,  then there exist $z_{j,k}\in{\rm M}_\infty(A)_+$ for $j=1, ..., r$ and $k=0, ..., n$ such that

$(1)$ $[z_{j,k}]\ll[y_j]$ for each $j$ and $k$;

$(2)$  $[y]\ll \sum_{j,k} [z_{j,k}]$;

$(3)$ $\sum_{j=1}^r [z_{j,k}]\ll [x]$ for each $k=0,...,n$.
\end{lemma}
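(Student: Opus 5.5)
The plan is to reduce both directions to the fact that every element of ${\rm Cu}(A)$ is a supremum of an increasing sequence of classes coming from ${\rm M}_\infty(A)_+$. More precisely, for $a\in (A\otimes\mathcal K)_+$ and $\varepsilon>0$ there is $a'\in {\rm M}_\infty(A)_+$ with $\|a-a'\|<\varepsilon$. By the standard perturbation lemma, this gives $(a-2\varepsilon)_+\precsim a'$ and $(a'-\varepsilon)_+\precsim a$, hence $[(a-2\varepsilon)_+]\leq[a']$. In particular, any compactly contained element $[w]\ll[a]$ satisfies $[w]\leq[(a-\eta)_+]$ for some $\eta>0$, and is therefore dominated by the class of some element of ${\rm M}_\infty(A)_+$ which is itself $\ll [a]$.

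\emph{The direction ``$\Rightarrow$''.} Suppose $\dim({\rm Cu}(A))\leq n$ and $[y]\ll[x]\ll[y_1]+\cdots+[y_r]$ with all elements in ${\rm M}_\infty(A)_+$. Definition \ref{def:2.1} gives $[z_{j,k}]\in{\rm Cu}(A)$ satisfying (i)--(iii), with representatives $z_{j,k}\in (A\otimes\mathcal K)_+$. It remains to replace them by elements of ${\rm M}_\infty(A)_+$. Since $[y]\ll\sum_{j,k}[z_{j,k}]$ and $\sum_{j,k}[(z_{j,k}-1/m)_+]$ is increasing with supremum $\sum_{j,k}[z_{j,k}]$, there is $\varepsilon>0$ with $[y]\leq\sum_{j,k}[(z_{j,k}-\varepsilon)_+]$. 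Now choose $z'_{j,k}\in{\rm M}_\infty(A)_+$ within $\varepsilon/2$ of $z_{j,k}$. Then
\[
[(z_{j,k}-\varepsilon)_+]\leq[z'_{j,k}]\leq[z_{j,k}],
\]
where the second inequality is arranged by taking $z'_{j,k}=(z''_{j,k}-\varepsilon/4)_+$ for a good approximant $z''_{j,k}$; that cut-down still belongs to ${\rm M}_\infty(A)_+$. Consequently $[z'_{j,k}]\ll[y_j]$ and $\sum_j[z'_{j,k}]\ll[x]$ are inherited from $[z_{j,k}]$. Finally, $[y]\ll\sum[z'_{j,k}]$ follows from $[y]\ll[x]$ after a slight adjustment: instead of $y$, apply the definition to some $[y]\ll[y']\ll[x]$. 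This yields $[y]\ll[y']\leq\sum_{j,k}[(z_{j,k}-\varepsilon)_+]\leq\sum_{j,k}[z'_{j,k}]$.

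\emph{The direction ``$\Leftarrow$''.} Suppose $[y]\ll[x]\ll[y_1]+\cdots+[y_r]$ in ${\rm Cu}(A)$. The strategy is to shrink $y$ and $x$ to ${\rm M}_\infty(A)$-representatives while keeping the relations and the $y_j$ under control. Pick $\delta>0$ with $[y]\leq[(x-\delta)_+]$, and take $x'\in{\rm M}_\infty(A)_+$ with $[(x-\delta)_+]\leq[x']\ll[x]$, which is possible by the approximation above. Also choose $y'\in{\rm M}_\infty(A)_+$ with $[y]\ll[y']\ll[x']$; as in Lemma \ref{lem:2.2}, it actually suffices to cover a slightly smaller element, so one may take $y'=(x'-\eta)_+$-type approximants. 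For the $y_j$, use $[x]\ll\sum_j[y_j]=\sup_m\sum_j[(y_j-1/m)_+]$ to get $[x']\ll[x]\leq\sum_j[(y_j-\varepsilon)_+]$. Then pick $y_j'\in{\rm M}_\infty(A)_+$ with $[(y_j-\varepsilon)_+]\leq[y'_j]\ll[y_j]$. Applying the hypothesis to $[y']\ll[x']\ll\sum_j[y'_j]$ produces $z_{j,k}\in{\rm M}_\infty(A)_+$ with
\[
[z_{j,k}]\ll[y'_j]\leq[y_j],\qquad [y]\leq[y']\ll\sum_{j,k}[z_{j,k}],\qquad \sum_j[z_{j,k}]\ll[x']\leq[x],
\]
which is exactly what Definition \ref{def:2.1} requires.

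The main obstacle is the technical approximation step: producing $a'\in{\rm M}_\infty(A)_+$ with $[(a-\varepsilon)_+]\leq[a']\ll[a]$. I would prove it by compressing $a$ by the unit $e_N$ of ${\rm M}_N\subset\mathcal K$, which gives $e_Nae_N\to a$. I would then apply the Kirchberg--R\o rdam lemma, which says that $\|a-b\|<\varepsilon$ implies $(a-\varepsilon)_+\precsim b$, and finally cut down again by functional calculus to land strictly below $[a]$.
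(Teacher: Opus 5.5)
Your proposal is correct and is essentially the paper's proof. In both directions you replace ${\rm Cu}(A)$-elements by ${\rm M}_\infty(A)_+$-elements attached to $\varepsilon$-cut-downs and use the slack coming from $\ll$. The only difference is that you derive the sandwich $[(a-\varepsilon)_+]\leq[a']\ll[a]$ yourself from the Kirchberg--R{\o}rdam perturbation lemma, whereas the paper quotes an exact equality $[(a-\delta)_+]=[a']$ with $a'\in{\rm M}_\infty(A)_+$ (Lemma 1.9 of its reference [P3]).

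One bookkeeping fix is needed in ``$\Leftarrow$''. If $x'$ only dominates $(x-\delta)_+$, there may be no $y'$ with $[y]\leq[y']\ll[x']$; for example, $[y]=[(x-\delta)_+]=[x']$ with this class non-compact. Either choose $x'$ with $[(x-\delta/2)_+]\leq[x']\ll[x]$ and take $y'=(x'-\eta)_+$ for small $\eta>0$, or reduce via Lemma \ref{lem:2.2} as the paper does.
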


\begin{proof}$\Longleftarrow$  For any $[y], [x], [y_1], \dots, [y_r]\in\mathrm{Cu}(A)$ with $[y]\ll[x]\ll[y_1]+\cdots+[y_r]$,
by Lemma \ref{lem:2.2}, for any $\varepsilon >0$, with $[y]\leq [(x-\varepsilon)_+]$, we need to show there exist $[z_{j,k}]\in\mathrm{Cu}(A)$ ($j=1,\dots,r$, $k=0,\dots,n$) satisfying conditions \begin{enumerate}
    \item $[z_{j,k}]\ll [y_j]$ for all $j,k$;
    \item $[(y-\varepsilon)_+]\ll \sum_{j,k}[z_{j,k}]$;
    \item $\sum_{j=1}^r [z_{j,k}]\ll [x]$ for all $k=0,\dots,n$.
\end{enumerate}

Choose sufficiently small $\delta$,  with $\delta<\varepsilon$, such that
$$[(y-\delta/2)_+]\ll [(x-\delta)_+]\ll [(y_1-\delta)_+]+\cdots+[(y_r-\delta)_+].$$
{ Then by Lemma 1.9 in \cite{P3}, } there exists $x', y', y_1', \cdots, y_r'\in {\rm M}_\infty(A)_+$ such that $$[(y-\delta/2)_+]=[y'],~~ [(x-\delta)_+]=[x'],~~ [(y_1-\delta)_+]=[y_1'],~~\cdots [(y_r-\delta)_+]=[y_r'].$$
 Then one has $$[y']\ll [x']\ll [y_1']+\cdots+[y_r'],$$ then  there exist
 exist $z_{j,k}'\in {\rm M}_\infty(A)_+$ ($j=1, \dots, r$, $k=0, \dots, n$)  such that

  $(1')$ $[z_{j,k}']\ll [y_j']$ for all $j,k$;

   $(2')$ $[y']\ll \sum_{j,k}[z_{j,k}']$;

  $(3')$  $\sum_{j=1}^r [z_{j,k}']\ll [x']$ for all $k=0,\dots,n$.

 We take $z_{j,k}=z_{j,k}'$,
therefore, we have

  $(1)$  $[z_{j,k}]\ll [y_j']\ll[y_j]$ for all $j, k$;

  $(2)$  $[(y-\varepsilon)_+]\ll [y']\ll\sum_{j,k}[z_{j,k}]$;

  $(3)$ $\sum_{j=1}^r [z_{j,k}]\ll [x']\ll[x]$ for all $k=0,\dots,n$.

$\Longrightarrow$
  For any $x, y,  y_1', \cdots, y_r'\in {\rm M}_\infty(A)_+$, with $[y]\ll[x]\ll[y_1]+\cdots+[y_r]$.
   Since $\dim(\mathrm{Cu}(A))\leq n$
  there exist
$[z_{j,k}']\in (A\otimes \mathcal{K})_+$ ($j=1,\dots,r$, $k=0,\dots,n$) satisfying conditions

$(1')$ $[z_{j,k}']\ll [y_j]$ for all $j,k$;

$(2')$ $[y]\ll \sum_{j,k}[z_{j,k}']$;

$(3')$ $\sum_{j=1}^r [z_{j,k}']\ll [x]$ for all $k=0,\dots,n$.

 Since $[y]\ll \sum_{j,k}[z_{j,k}']$, there exists $\delta$ such that
  $$[y]\ll \sum_{j,k}[(z_{j,k}'-\delta)_+].$$

 {By Lemma 1.9 in \cite{P3}, } there exist $z_{j,k}\in {\rm M}_\infty(A)_+$ such that $[(z_{j,k}'-\delta)_+]=[z_{j,k}]$,
 therefore one has
 \begin{enumerate}
    \item $[z_{j,k}]\ll[z_{j,k}'] \ll [y_j]$ for all $j,k$;
    \item $[y]\ll \sum_{j,k}[z_{j,k}]$;
    \item $\sum_{j=1}^r [z_{j,k}]\ll \sum_{j=1}^r [z_{j,k}']\ll[x]$ for all $k=0,\dots,n$.
\end{enumerate}

 \end{proof}

By Lemma 3.3  in \cite{HV1} and Lemma  \ref{lem:2.3}, we have the following Theorem.

\begin{lemma}\label{lem:2.4}
Let $A$ be a $\mathrm{C}^*$-algebra. Then $\dim(\mathrm{Cu}(A))\leq n$ if and only if for all $x, y_1, \dots, y_r\in {\rm M}_\infty(A)_+$  satisfying $[x]\ll[y_1]+\cdots+[y_r]$, for any $\delta>0$,  there exist $z_{j,k}\in (A\otimes \mathcal{K})_+$ with $j=1,\dots,r$ and $k=0,\dots,n$ satisfying the following conditions:
\begin{enumerate}
    \item $[z_{j,k}]\leq  [y_j]$ for all $j,k$;
    \item $[(x-\delta)_+]\leq  \sum_{j,k}[z_{j,k}]$;
    \item $\sum_{j=1}^r [z_{j,k}]\leq  [x]$ for all $k=0,\dots,n$.
\end{enumerate}
\end{lemma}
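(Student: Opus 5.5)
Both directions rest on two inputs: Lemma 3.3 of \cite{HV1}, which reformulates $\dim(S)\leq n$ for a Cu-semigroup $S$ without the auxiliary element $x'$, and Lemma \ref{lem:2.3}, which lets all test elements be taken in ${\rm M}_\infty(A)_+$. Lemma 3.3 of \cite{HV1} says that $\dim(S)\leq n$ if and only if whenever $x\ll y_1+\cdots+y_r$ and $x'\ll x$, there exist $z_{j,k}\in S$ with $z_{j,k}\leq y_j$, $x'\leq\sum_{j,k}z_{j,k}$ and $\sum_j z_{j,k}\leq x$ for each $k$. In our setting $x'$ ranges over the elements $[(x-\delta)_+]$, $\delta>0$. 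Since every $x''\ll[x]$ satisfies $x''\leq[(x-\delta)_+]$ for some $\delta$, it suffices to treat $x'=[(x-\delta)_+]$. Likewise, by Lemma \ref{lem:2.3} and Lemma 1.9 of \cite{P3}, $x,y_1,\dots,y_r$ may be restricted to ${\rm M}_\infty(A)_+$, as in the proof of Lemma \ref{lem:2.3}.

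($\Longrightarrow$) Suppose $\dim({\rm Cu}(A))\leq n$, and let $x,y_1,\dots,y_r\in{\rm M}_\infty(A)_+$ with $[x]\ll\sum[y_j]$ and $\delta>0$ be given. Set $x'=[(x-\delta)_+]\ll[x]$. Definition \ref{def:2.1}, read with $y:=x'$ (the definition has an evident typo, and $y$ plays the role of $x'$), produces $z_{j,k}\in(A\otimes\mathcal K)_+$ satisfying the three relations with $\ll$. Since $\ll$ implies $\leq$, these give (1)--(3) immediately. This direction is trivial.

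($\Longleftarrow$) Assume the displayed condition and take $[y]\ll[x]\ll[y_1]+\cdots+[y_r]$ with elements in ${\rm M}_\infty(A)_+$. By Lemma \ref{lem:2.3} we must produce $z_{j,k}$ satisfying the strict relations $\ll$. I would insert room at each step as follows.
\begin{enumerate}
\item Pick $\varepsilon>0$ with $[y]\ll[(x-\varepsilon)_+]$.
\item Using $[x]\ll\sum[y_j]$ and the fact that $\sum_j[(y_j-\eta)_+]$ is an increasing sequence with supremum $\sum_j[y_j]$ as $\eta\to0$, pick $\eta>0$ with $[x]\ll\sum_j[(y_j-\eta)_+]$.
\item Apply the hypothesis to $x_0:=(x-\varepsilon/2)_+$, the elements $(y_j-\eta)_+$, and $\delta=\varepsilon/2$. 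Since $[x_0]\leq[x]\ll\sum_j[(y_j-\eta)_+]$, the hypothesis applies, giving $w_{j,k}$ with $[w_{j,k}]\leq[(y_j-\eta)_+]\ll[y_j]$, $\sum_j[w_{j,k}]\leq[(x-\varepsilon/2)_+]\ll[x]$, and $[y]\ll[(x-\varepsilon)_+]\leq\sum[w_{j,k}]$.
\item Since $[y]\ll\sum[w_{j,k}]$, choose $\theta>0$ with $[y]\leq\sum_{j,k}[(w_{j,k}-\theta)_+]$, and set $z_{j,k}:=(w_{j,k}-\theta)_+$.
\end{enumerate}
Then $[z_{j,k}]\leq[w_{j,k}]\ll[y_j]$, and $\sum_j[z_{j,k}]\leq\sum_j[w_{j,k}]\ll[x]$. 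The remaining relation is $[y]\ll\sum[z_{j,k}]$; to get $\ll$ rather than $\leq$, choose a slightly smaller cutoff first, using $[y]\ll[(y-\cdot)]$-style room from $[y]\ll[(x-\varepsilon)_+]$. Concretely, pick $[y]\leq[(x-2\varepsilon)_+]$ at the start so that an extra $\ll$ is available. Finally, replace the $z_{j,k}$ by representatives in ${\rm M}_\infty(A)_+$ via Lemma 1.9 of \cite{P3}, as in Lemma \ref{lem:2.3}.

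The main obstacle is this bookkeeping of $\varepsilon$'s: non-strict inequalities must be upgraded to way-below relations by shrinking to cut-downs at each step. That works because $[(a-\varepsilon)_+]\ll[a]$ and because $\ll$ composes with $\leq$ on either side.
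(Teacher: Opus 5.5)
Your proof is correct, but your route differs from the paper's: you prove the converse by hand instead of citing Lemma 3.3 of \cite{HV1}. The paper proves the lemma essentially by citation. Starting from $[y]\ll[(x-\delta)_+]\ll[x]\ll\sum_j[y_j]$, it invokes Lemma~\ref{lem:2.3} to get $z_{j,k}$ with way-below relations and weakens them to $\leq$. So the paragraph labelled $\Longleftarrow$ actually carries out the forward implication. The genuine converse is dispatched with ``by Lemma 3.3 in \cite{HV1} and Lemma~\ref{lem:2.3}''. The paper does not explain how a hypothesis stated only for $x,y_j\in{\rm M}_\infty(A)_+$ yields the abstract criterion of \cite{HV1}, which quantifies over all of ${\rm Cu}(A)$.

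You instead verify the right-hand side of Lemma~\ref{lem:2.3} directly. You feed the hypothesis the cut-downs $(x-\varepsilon/2)_+$ and $(y_j-\eta)_+$ with $\delta=\varepsilon/2$. This forces every non-strict relation it returns to land strictly below $[x]$ and $[y_j]$, and strictly above $[y]$. One further cut-down of the $w_{j,k}$, together with Lemma 1.9 of \cite{P3}, then gives representatives in ${\rm M}_\infty(A)_+$. Your detailed argument therefore never actually uses Lemma 3.3 of \cite{HV1}, despite your opening paragraph, and it shows explicitly where the room comes from. The paper's route is shorter, but it leans on the abstract Cu-semigroup result and leaves the reduction to ${\rm M}_\infty(A)_+$ implicit.

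One small repair is needed in step 4. Choosing $[y]\leq[(x-2\varepsilon)_+]$ at the start still only gives $[y]\leq\sum_{j,k}[z_{j,k}]$. The simplest fix: since $[y]\ll\sum_{j,k}[w_{j,k}]$, pick $\theta'>0$ with $[y]\leq\sum_{j,k}[(w_{j,k}-\theta')_+]$ and put $z_{j,k}:=(w_{j,k}-\theta'/2)_+$. Then $[y]\leq\sum_{j,k}[(w_{j,k}-\theta')_+]\ll\sum_{j,k}[z_{j,k}]$. Alternatively, start from $[y]\ll[(x-2\varepsilon)_+]$ rather than $\leq$.
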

\begin{proof}$\Longleftarrow$ For any $[y], [x], [y_1], \dots, [y_r]\in\mathrm{Cu}(A)$ with $[y]\ll[x]\ll[y_1]+\cdots+[y_r]$, there exists $\delta>0$ such that $$[y]\ll[(x-\delta)_+]\ll[x]\ll[y_1]+\cdots+[y_r].$$  By Lemma \ref{lem:2.3},  there exist $z_{j,k}\in (A\otimes \mathcal{K})_+$ for $j=1,\dots,r$ and $k=0,\dots,n$ satisfying the following conditions:
\begin{enumerate}
    \item $[z_{j,k}]\ll [y_j]$ for all $j,k$;
    \item $[(x-\delta)_+]\ll  \sum_{j,k}[z_{j,k}]$;
    \item $\sum_{j=1}^r [z_{j,k}]\ll [x]$ for all $k=0,\dots,n$.
\end{enumerate} By Lemma 3.3  in \cite{HV1}, one has
\begin{enumerate}
    \item $[z_{j,k}]\leq  [y_j]$ for all $j,k$;
    \item $[y]\leq [(x-\delta)_+]\leq  \sum_{j,k}[z_{j,k}]$;
    \item $\sum_{j=1}^r [z_{j,k}]\leq  [x]$ for all $k=0,\dots,n$.
\end{enumerate}
$\Longrightarrow$ By Lemma 3.3  in \cite{HV1} and Lemma \ref{lem:2.3}.

\end{proof}

\begin{definition}\label{def:2.5}Let $A$ be a  ${\rm C^*}$-algebra.  Let  $a, b\in {\rm {Cu}}(A)$, write $a<_{s}b$ if $(k+1)a\leq kb$ for some nonzero integer $k\in \mathbb{N}$. write $x \leq_N y$ for some $N\in \mathbb{N}$
if $na\leq nb$ for $N\leq n$. \end{definition}

The property  of $n$-comparison  was  introduced by Winter in \cite{W3}.

 \begin{definition}{\rm (\cite{W3}.)}\label{def:2.6}
 Let $A$ be a unital $C^*$-algebra. We say  ${\rm Cu}(A)$ {\rm (${\rm W}(A)$)} has $n$-comparison property,  if where $x,~ y_0,~ y_1,~ y_2,~  \cdots,~ y_n$  are elements
 in ${\rm Cu}(A)$ {\rm (${\rm W}(A)$)} such that $ x<_s  y_j$ for all $j=0,~ 1,~ \cdots, ~ n$, then  $x\leq y_0+y_1+\cdots +y_n$. Here $ x<_s  y$ means that $(k+1)x\leq ky$
 for some nature number $k$. It follows immediately from the definitions that ${\rm Cu}(A)$ {\rm (${\rm W}(A)$)} is almost unperforated if and only if ${\rm Cu}(A)$ {\rm (${\rm W}(A)$)} has 0-comparison.
\end{definition}

The property  of $m$-almost divisible  was  introduced by Robert and Tikuisis in \cite{RT}.

\begin{definition}{\rm (\cite{RT}.)}\label{def:2.7}
	Let $A$ be a unital C*-algebra and let $\operatorname{Cu}(A)$ be its Cuntz semigroup. We say that $\operatorname{Cu}(A)$ is \emph{$m$-almost divisible} for some $m \in \mathbb{N}$ if, for every $a, a' \in \operatorname{Cu}(A)$ with $a'\ll a$  and every $k \in \mathbb{N}$, there exists $b \in \operatorname{Cu}(A)$ such that $k b \leq a$ and $a' \leq (k+1)(m+1)b.$  We say that $A$ is \emph{$m$-almost divisible} if ${\rm Cu}(A)$  is  \emph{$m$-almost divisible}. We say
that $A$ is almost divisible if $\operatorname{Cu}(A)$ is \emph{$0$-almost divisible}.
\end{definition}

The following definition was introduced by Elliott, Fan and Fang in \cite{EFF1}.

\begin{definition}{\rm (\cite{EFF1}.)} \label{def:2.8}  Let $A$ be a unital C*-algebra and let $m\in \mathbb{N}$. We say that ${\rm Cu}(A)$ is weakly ($2$-$m$)-almost divisible if for each $a'\ll a,$  any $k\in \mathbb{N}$  there exists $b\in {\rm Cu}(A)$ such that $kb\leq 2a$ and $a'\leq (k+1)(m+1)b$.  We say that $A$ is weakly ($2$-$m$)-almost divisible if ${\rm Cu}(A)$  is  weakly ($2$-$m$)-almost divisible.
\end{definition}

The following two theorems were obtained by Archey,  Buck,  Mohammadkarimi,  Phillips, and  Seth  in \cite{AJJN}.

\begin{theorem}{\rm (\cite{AJJN}.)}\label{thm:2.9}
Let $A$ be a
${\rm C^*}$-algebra and let $n\in \mathbb{Z}_{>0}$. Then ${\rm Cu}(A)$ has $n$-comparison if and only if ${\rm W}(A)$ has   $n$-comparison.
\end{theorem}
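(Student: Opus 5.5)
One direction is immediate, so the plan is to prove that $n$-comparison in ${\rm Cu}(A)$ implies it in ${\rm W}(A)$ and then prove the converse by approximation.

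\emph{From ${\rm Cu}(A)$ to ${\rm W}(A)$.} We use the natural order embedding ${\rm W}(A)\to{\rm Cu}(A)$, $[a]\mapsto[a]$, with $a\in{\rm M}_\infty(A)_+$ regarded inside $A\otimes\mathcal K$. This map is additive and both preserves and reflects the order: Cuntz subequivalence of elements of ${\rm M}_\infty(A)_+$ is the same whether computed in some ${\rm M}_N(A)$ or in $A\otimes\mathcal K$, by the standard hereditary-subalgebra argument. Now suppose $x<_s y_j$ in ${\rm W}(A)$ for each $j$, with witnesses $k_j$. Then $(k_j+1)x\le k_jy_j$ also holds in ${\rm Cu}(A)$. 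By $n$-comparison there, $x\le y_0+\cdots+y_n$ in ${\rm Cu}(A)$. Since the order is reflected, the same inequality holds in ${\rm W}(A)$.

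\emph{From ${\rm W}(A)$ to ${\rm Cu}(A)$.} Let $x,y_0,\dots,y_n\in{\rm Cu}(A)$ with $(k_j+1)x\le k_jy_j$. Since ${\rm Cu}(A)$ is a Cu-semigroup, it suffices to show $x'\le \sum_j y_j$ for every $x'\ll x$. The steps are as follows.

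\begin{enumerate}
\item Write $x=[a]$ with $a\in(A\otimes\mathcal K)_+$. Fix $\varepsilon>0$; the elements $(a-\varepsilon)_+$ are cofinal among the $x'\ll x$, so it is enough to show $[(a-\varepsilon)_+]\le\sum_j y_j$.
\item Apply the strict inequality directly: $(k_j+1)[(a-\varepsilon)_+]\ll (k_j+1)x\le k_jy_j$.
\item Write $y_j=[b_j]$. Since $k_jy_j=\sup_\delta k_j[(b_j-\delta)_+]$, there is $\delta>0$, which we may take uniform in $j$, with
$$(k_j+1)[(a-\varepsilon)_+]\le k_j[(b_j-\delta)_+]\qquad\text{for all } j.$$
\item By Lemma 1.9 of \cite{P3}, as already used in Lemma \ref{lem:2.3}, the classes $[(a-\varepsilon)_+]$ and $[(b_j-\delta)_+]$ are represented by elements of ${\rm M}_\infty(A)_+$. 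Alternatively, one can first cut $a$ and $b_j$ down by corner projections $p_N\otimes 1$, which lands in ${\rm M}_N(A)$ up to a small error.
\item The inequalities from Step 3 now hold in ${\rm W}(A)$, because the order is reflected as in the first direction.
\item Apply $n$-comparison in ${\rm W}(A)$ to get $[(a-\varepsilon)_+]\le\sum_j[(b_j-\delta)_+]\le\sum_j y_j$.
\item Let $\varepsilon\to0$ to conclude $x\le\sum_j y_j$.
\end{enumerate}

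The main obstacle is Step 4, passing from $A\otimes\mathcal K$ to ${\rm M}_\infty(A)$ while keeping strict comparison intact. The cutoffs $(\cdot-\varepsilon)_+$ leave room for this, but we must check two things. First, the representatives in ${\rm M}_\infty(A)$ must preserve the relation $(k+1)\tilde x\le k\tilde y_j$ itself, not merely an approximate version of it. Second, Lemma 1.9 of \cite{P3} must give exact Cuntz-class equality for the cut-down elements. If exact equality fails, the fallback is to use approximate representatives, namely $\tilde a\in{\rm M}_N(A)_+$ with $[(a-2\varepsilon)_+]\le[\tilde a]\le[(a-\varepsilon)_+]$, and similarly for the $b_j$. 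We then repeat the argument with slightly shifted cutoffs; this still yields the strict inequalities needed in ${\rm W}(A)$ and gives the conclusion after letting the tolerances go to zero.
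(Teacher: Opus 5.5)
The paper does not prove this statement; it quotes it from \cite{AJJN}, so there is no internal argument to compare yours against. Your proof is correct and self-contained, and it is the standard argument.

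\textbf{First direction.} The implication from ${\rm Cu}(A)$ to ${\rm W}(A)$ follows because ${\rm W}(A)\to{\rm Cu}(A)$ is an additive order embedding. To see that the order is reflected, note that if $a,b\in{\rm M}_N(A)_+$ and $v_kbv_k^*\to a$ with $v_k\in A\otimes\mathcal K$, then the compressions $p_Nv_kp_N\in{\rm M}_N(A)$ already witness $a\precsim b$.

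\textbf{Converse.} This follows by the reduction you describe: pass to $[(a-\varepsilon)_+]$ and $[(b_j-\delta)_+]$, move these into ${\rm W}(A)$, apply comparison there, and let $\varepsilon\to0$.

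\textbf{Step 4.} The point you flag here is not a genuine obstacle, because exact representation holds. Choose $N$ with $\|a-p_Nap_N\|<\varepsilon$.
\begin{itemize}
\item The Kirchberg--R{\o}rdam lemma gives a contraction $d\in A\otimes\mathcal K$ with $(a-\varepsilon)_+=d(p_Nap_N)d^*=zz^*$, where $z=d(p_Nap_N)^{1/2}$.
\item Then $z^*z$ lies in $p_N(A\otimes\mathcal K)p_N\cong{\rm M}_N(A)$.
\item Hence $[(a-\varepsilon)_+]=[z^*z]$ lies in the image of ${\rm W}(A)$. This is the same fact the paper takes from \cite{P3} in Lemma~\ref{lem:2.3}.
\end{itemize}
Consequently the relation $(k_j+1)[\tilde a]\le k_j[\tilde b_j]$ is literally the inequality from your Step 3. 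Since the embedding reflects order, it holds in ${\rm W}(A)$ as well.

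\textbf{Fallback.} Your fallback with approximate representatives also works, but the approximations must be taken on the correct sides.
\begin{itemize}
\item The approximant of $x$ must lie below $[(a-\varepsilon)_+]$, for instance $\tilde a=p_N(a-\varepsilon)_+p_N$. For large $N$ this satisfies $[(a-2\varepsilon)_+]\le[\tilde a]\le[(a-\varepsilon)_+]$.
\item The approximants of the $y_j$ must lie above $[(b_j-\delta)_+]$, for instance $\tilde b_j=p_N(b_j-\delta/2)_+p_N$. For large $N$ this satisfies $[(b_j-\delta)_+]\le[\tilde b_j]\le[b_j]$.
\end{itemize}
With these choices, $(k_j+1)[\tilde a]\le(k_j+1)[(a-\varepsilon)_+]\le k_j[(b_j-\delta)_+]\le k_j[\tilde b_j]$ holds exactly. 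Comparison in ${\rm W}(A)$ then gives $[(a-2\varepsilon)_+]\le\sum_j y_j$.

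Using the same shape of approximation for the $b_j$ as for $a$, with the same cutoff $\delta$, would place $\tilde b_j$ below $[(b_j-\delta)_+]$. That is the wrong side, and the chain above would break.
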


\begin{theorem}{\rm (\cite{AJJN}.)}\label{thm:2.10}
Let $A$ be a
${\rm C^*}$-algebra and let $n\in \mathbb{Z}_{>0}$. Then ${\rm Cu}(A)$ is $m$-almost divisible if and only if  for every $a\in {\rm M}_{\infty}(A)_+$ , every $n\in \mathbb{Z}_{>0}$ and every $\varepsilon>0$, there exist $b\in  {\rm M}_{\infty}(A)_+$ such that in ${\rm W}(A)$ we have $k[b]\leq [a]$ and $[(a-\varepsilon)_+]\leq (k+1)(m+1)[b]$.
 \end{theorem}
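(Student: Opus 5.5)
The plan is to pass back and forth between ${\rm Cu}(A)$ and ${\rm W}(A)$ using two standard facts. The first is that for $a, b\in {\rm M}_\infty(A)_+$, Cuntz subequivalence in ${\rm M}_\infty(A)$ coincides with Cuntz subequivalence in $A\otimes\mathcal{K}$. This gives an order embedding ${\rm W}(A)\hookrightarrow {\rm Cu}(A)$, so inequalities in ${\rm W}(A)$ may be checked in ${\rm Cu}(A)$ and conversely. The second is that every cut-down $(c-\delta)_+$ with $c\in (A\otimes\mathcal{K})_+$ is Cuntz equivalent to an element of ${\rm M}_\infty(A)_+$ (Lemma 1.9 in \cite{P3}, as already used in Lemma \ref{lem:2.3}). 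Besides these, I will use that $[c]=\sup_\delta[(c-\delta)_+]$, and that $x\ll [c]$ holds if and only if $x\leq [(c-\delta)_+]$ for some $\delta>0$.

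($\Rightarrow$) Let $a\in{\rm M}_\infty(A)_+$, $k\in\mathbb{N}$ and $\varepsilon>0$ be given. Apply $m$-almost divisibility of ${\rm Cu}(A)$ to $a'=[(a-\varepsilon/2)_+]\ll[a]$. This produces $b\in{\rm Cu}(A)$, say $b=[d]$ with $d\in(A\otimes\mathcal{K})_+$, such that $k b\leq [a]$ and $[(a-\varepsilon/2)_+]\leq (k+1)(m+1)b$. Since
$$[(a-\varepsilon)_+]\ll[(a-\varepsilon/2)_+]\leq (k+1)(m+1)b=\sup_{\delta}\,(k+1)(m+1)[(d-\delta)_+],$$
there is $\delta>0$ with $[(a-\varepsilon)_+]\leq (k+1)(m+1)[(d-\delta)_+]$. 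Next choose $b_0\in{\rm M}_\infty(A)_+$ with $[b_0]=[(d-\delta)_+]$. Then $k[b_0]\leq kb\leq [a]$, and all the elements involved now lie in ${\rm M}_\infty(A)_+$. By the order embedding, both inequalities therefore hold in ${\rm W}(A)$.

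($\Leftarrow$) Let $a'\ll a$ in ${\rm Cu}(A)$, with $a=[c]$ and $c\in(A\otimes\mathcal{K})_+$. First pick $\eta>0$ with $a'\leq[(c-\eta)_+]$. Then pick $a_0\in{\rm M}_\infty(A)_+$ with $[a_0]=[(c-\eta/2)_+]$. Because
$$a'\leq [(c-\eta)_+]\ll[(c-\eta/2)_+]=[a_0]=\sup_\varepsilon[(a_0-\varepsilon)_+],$$
there is $\varepsilon>0$ with $a'\leq[(a_0-\varepsilon)_+]$. Now apply the hypothesis to $a_0$, $k$ and $\varepsilon$. This gives $b\in{\rm M}_\infty(A)_+$ with $k[b]\leq[a_0]$ and $[(a_0-\varepsilon)_+]\leq(k+1)(m+1)[b]$ in ${\rm W}(A)$, hence also in ${\rm Cu}(A)$. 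Consequently $k[b]\leq [a_0]\leq a$ and $a'\leq (k+1)(m+1)[b]$, as required.

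I expect the main obstacle to be the bookkeeping between the two semigroups rather than any genuinely new idea. There are two specific points to verify. One is that the order on ${\rm W}(A)$ really is the restriction of the order on ${\rm Cu}(A)$, i.e.\ that the approximating elements $v_n\in A\otimes\mathcal{K}$ can be compressed into finite matrices. The other is the $\varepsilon$-room needed to replace compactly contained elements by cut-downs that are representable in ${\rm M}_\infty(A)$. I would settle the first by compressing with the standard approximate units $1_{{\rm M}_N}$ of $\mathcal{K}$ and applying Rørdam's lemma. The second follows from the sup-characterization of way-below in ${\rm Cu}(A)$.
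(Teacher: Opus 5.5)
Your proof is correct. The paper gives no proof of its own here: the statement is simply quoted from \cite{AJJN}. Your argument is the standard translation between ${\rm Cu}(A)$ and ${\rm W}(A)$, and it uses the same two devices the paper uses in Lemma \ref{lem:2.3}:
\begin{enumerate}
\item replacing a cut-down $(d-\delta)_+$ with $d\in(A\otimes\mathcal{K})_+$ by a Cuntz-equivalent element of ${\rm M}_\infty(A)_+$;
\item using $x\ll[c]$ if and only if $x\leq[(c-\delta)_+]$ for some $\delta>0$.
\end{enumerate}
Both directions are sound. In ($\Rightarrow$) the extra room from $\varepsilon/2$ together with $(k+1)(m+1)[d]=\sup_\delta (k+1)(m+1)[(d-\delta)_+]$ lets you pass to a representable $b_0$. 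In ($\Leftarrow$) only the trivial direction W-to-Cu is needed. You also correctly read the statement's misprinted quantifier ``every $n\in\mathbb{Z}_{>0}$'' as ``every $k$''.

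One simplification: the order embedding ${\rm W}(A)\hookrightarrow{\rm Cu}(A)$ needs no R{\o}rdam lemma. Let $a,b\in{\rm M}_N(A)=p(A\otimes\mathcal{K})p$ with $p=1_{{\rm M}_N}$, and suppose $v_jbv_j^*\to a$ with $v_j\in A\otimes\mathcal{K}$. Then $(pv_jp)\,b\,(pv_jp)^*=pv_jbv_j^*p\to pap=a$, and this convergence already takes place inside ${\rm M}_N(A)$. R{\o}rdam's lemma, together with $xx^*\sim x^*x$, is instead what lies behind the representability of $(d-\delta)_+$ in ${\rm M}_\infty(A)$.
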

The proof of the following theorem is nearly identical to that of Theorem \ref{thm:2.10}, and is therefore omitted.
\begin{theorem}\label{thm:2.11}
Let $A$ be a
${\rm C^*}$-algebra and let $m\in \mathbb{Z}_{>0}$. Then ${\rm Cu}(A)$ is weakly  ($2$-$m$)-almost divisible  if and only if  for every $a\in {\rm M}_{\infty}(A)_+$ , every $k\in \mathbb{Z}_{>0}$ and every $\varepsilon>0$, there exist $b\in  {\rm M}_{\infty}(A)_+$ such that in ${\rm W}(A)$ we have $k[b]\leq 2[a]$ and $[(a-\varepsilon)_+]\leq (k+1)(m+1)[b]$.

 \end{theorem}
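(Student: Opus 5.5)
The plan is to mimic the proof of Theorem \ref{thm:2.10} from \cite{AJJN}, changing only the upper bound $k[b]\leq [a]$ to $k[b]\leq 2[a]$ throughout. There are two directions.

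For ($\Longrightarrow$), suppose ${\rm Cu}(A)$ is weakly ($2$-$m$)-almost divisible. Fix $a\in{\rm M}_\infty(A)_+$, $k$ and $\varepsilon>0$. Apply Definition \ref{def:2.8} to $a'=[(a-\varepsilon/2)_+]\ll[a]$. This gives $b'\in{\rm Cu}(A)$ with $kb'\leq 2[a]$ and $[(a-\varepsilon/2)_+]\leq(k+1)(m+1)b'$.

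Next, since $[(a-\varepsilon)_+]\ll[(a-\varepsilon/2)_+]\leq (k+1)(m+1)b'$ and $b'=\sup_\delta[(b''-\delta)_+]$ for a representative $b''\in(A\otimes\mathcal K)_+$, we may choose $\delta>0$ with $[(a-\varepsilon)_+]\leq (k+1)(m+1)[(b''-\delta)_+]$. By Lemma 1.9 in \cite{P3}, as used in Lemma \ref{lem:2.3}, the class $[(b''-\delta)_+]$ is represented by some $b\in{\rm M}_\infty(A)_+$. Then $k[b]\leq kb'\leq 2[a]$.

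These inequalities hold in ${\rm Cu}(A)$ between elements of ${\rm M}_\infty(A)_+$. They therefore hold in ${\rm W}(A)$, because ${\rm W}(A)\to{\rm Cu}(A)$ is an order embedding: Cuntz subequivalence of matrices over $A$ can be tested inside $A\otimes\mathcal K$ and compressed back to a corner.

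For ($\Longleftarrow$), let $a'\ll a$ in ${\rm Cu}(A)$ and $k\in\mathbb N$. Pick a representative $\tilde a\in(A\otimes\mathcal K)_+$ and $\eta>0$ with $a'\leq[(\tilde a-\eta)_+]$. Again by Lemma 1.9 of \cite{P3}, choose $c\in{\rm M}_\infty(A)_+$ with $[c]=[(\tilde a-\eta/2)_+]$. Then $a'\leq[(\tilde a-\eta)_+]\ll[c]\leq a$, so there is $\varepsilon>0$ with $a'\leq[(c-\varepsilon)_+]$.

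Applying the hypothesis to $c,k,\varepsilon$ yields $b\in{\rm M}_\infty(A)_+$ with $k[b]\leq 2[c]\leq 2a$ and $a'\leq[(c-\varepsilon)_+]\leq(k+1)(m+1)[b]$. Passing from ${\rm W}(A)$ to ${\rm Cu}(A)$ preserves order, so $[b]\in{\rm Cu}(A)$ witnesses Definition \ref{def:2.8}.

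The main obstacle is the bookkeeping of approximation: finding a matrix representative whose class sits between $a'$ and $a$, and getting the cut-down $\varepsilon$ compatible with both sides. The factor $2$ plays no role beyond being carried through, since $2[c]\leq 2a$ follows from $[c]\leq a$ by additivity. This is exactly why the argument of \cite{AJJN} transfers verbatim.
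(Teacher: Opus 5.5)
Your proposal is correct and takes the same route as the paper, which omits the proof and says only that it is nearly identical to that of Theorem~\ref{thm:2.10} from \cite{AJJN}; your argument is that adaptation with the factor $2$ carried through. You also supply the details the paper leaves out correctly: matrix representatives of $[(b''-\delta)_+]$ and $[(\tilde a-\eta/2)_+]$ via Lemma 1.9 of \cite{P3}, the way-below approximations, and the fact that ${\rm W}(A)\to{\rm Cu}(A)$ is an order embedding.
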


 Robert and R{\o}rdam introduced the $(k,w)$-divisible property in \cite{RR}.

\begin{definition}{\rm (\cite{RR}.)} \label{def:2.12} Let $A$ be  a ${\rm C^*}$-algebra  and fix $u\in {\rm Cu}(A)$. Let $m$ be a positive  integer. $u$ is $(m,\omega)$-divisible if for every $u'\in {\rm Cu}(A)$ with $u'\ll u$ there exists $x\in {\rm Cu}(A)$ such that $mx\leq u$ and
 $u'\leq nx$  for some positive integer $n\in \mathbb{N}$.
We say that $A$ is $(m,\omega)$-divisible if  for every  $u\in {\rm Cu}(A)$  is $(m,\omega)$-divisible.
  \end{definition}

The following two theorems  were obtained by Thiel and Vilalta in \cite{HV}.

\begin{theorem} {\rm (\cite{HV}.)} \label{thm:2.13} Let $A$ be  a ${\rm C^*}$-algebra. Then, the following are equivalent:

$(1)$ $A$ has the Global Glimm Propety;

$(2)$ for each $a\in A_+$ and each $\varepsilon>0$, there exists a $*$-homomorphism
$$\varphi:{\rm M}_2(C_0(0,1])\to \overline{aAa}$$ such that $(a-\varepsilon)_+$ belongs to the ideal generated by the image of $\varphi$;

$(3)$ ${\rm Cu}(A)$ is $(2,\omega)$-divisible;

$(4)$ ${\rm Cu}(A)$ is $(n,\omega)$-divisible for every $n\geq 2$;
\end{theorem}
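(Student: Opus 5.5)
The plan is to prove the cycle $(1)\Leftrightarrow(2)$, $(2)\Rightarrow(3)\Rightarrow(4)\Rightarrow(3)\Rightarrow(2)$. The equivalence $(1)\Leftrightarrow(2)$ is essentially the Kirchberg--R{\o}rdam definition of the Global Glimm Property. That definition is phrased with order-zero maps $M_2\to \overline{aAa}$, and such maps correspond exactly to $*$-homomorphisms $\mathrm{M}_2(C_0(0,1])\to\overline{aAa}$. So only a translation is needed.

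Since the Global Glimm Property passes to $A\otimes\mathcal K$, I will work in the stabilization throughout. Every $u\in{\rm Cu}(A)$ is then of the form $[a]$.

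\textbf{$(2)\Rightarrow(3)$.} Let $u'\ll u=[a]$ and choose $\varepsilon>0$ with $u'\le[(a-\varepsilon)_+]$. Apply (2) with $\varepsilon/2$ to get $\varphi$, and put $c=\varphi(e_{11}\otimes\iota)$ and $x=[c]$.
\begin{itemize}
\item Since $\varphi(e_{22}\otimes\iota)\sim c$ and the two are orthogonal, $2x=[\varphi(1\otimes\iota)]\le[a]$, because the image of $\varphi$ lies in $\overline{aAa}$.
\item The ideal generated by the image of $\varphi$ equals the ideal generated by $c$.
\item By the standard fact that $b\in\overline{AcA}$ implies $[(b-\delta)_+]\le N[c]$ for some $N$, we get $u'\le[(a-\varepsilon)_+]\le Nx$.
\end{itemize}

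\textbf{$(3)\Rightarrow(4)\Rightarrow(3)$.} The implication $(4)\Rightarrow(3)$ is trivial. For $(3)\Rightarrow(4)$ I will show $(2^k,\omega)$-divisibility by halving repeatedly; the case of general $n$ then follows, since $n\le 2^k$ gives $nx\le 2^kx$.
\begin{itemize}
\item Given $u'\ll u$, pick $u'\ll u''\ll u$.
\item Get $x$ with $2x\le u$ and $u''\le n_1x$.
\item Pick $x'\ll x$ with $u'\le n_1x'$, which is possible because $u'\ll u''\le n_1 x=\sup n_1x'$.
\item Divide $x$ with respect to $x'$ to get $y$ with $2y\le x$ and $x'\le n_2y$.
\item Then $4y\le u$ and $u'\le n_1n_2y$. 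Iterate.
\end{itemize}

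\textbf{$(3)\Rightarrow(2)$.} This step is the main obstacle, since it goes from purely order-theoretic data to an actual $*$-homomorphism. Given $a$ and $\varepsilon$, apply (3) to $[(a-\varepsilon/2)_+]\ll[a]$. This gives $x=[b]$ with $2x\le[a]$ and $[(a-\varepsilon/2)_+]\le Nx$.

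From $b\oplus b\precsim a$ and R{\o}rdam's lemma, for small $\delta>0$ I will try to find the following:
\begin{itemize}
\item orthogonal positive $e,f\in\overline{aAa}$ that are both Cuntz equivalent to $(b-\delta)_+$;
\item an element $v$ with $v^*v\in\overline{eAe}$ and $vv^*\in\overline{fAf}$, obtained from the Rørdam factorization of $(b-\delta)_+\precsim e$ and $(b-\delta)_+\precsim f$, so that $v^*v$ is full in the ideal generated by $(b-\delta)_+$.
\end{itemize}
Since $e\perp f$, we get $v^2=0$. The universal property of $\mathrm{M}_2(C_0(0,1])$ as the $C^*$-algebra generated by a contractive square-zero element then gives $\varphi:\mathrm{M}_2(C_0(0,1])\to C^*(v)\subseteq\overline{aAa}$.

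Finally, $(a-\varepsilon)_+\precsim N\cdot(b-\delta)_+$ once $\delta$ is chosen small, using $[(a-\varepsilon/2)_+]\le N[b]$ and compact containment. This places $(a-\varepsilon)_+$ in the ideal generated by $(b-\delta)_+$, which is the ideal generated by $v^*v$ and hence by the image of $\varphi$. The delicate point is arranging orthogonality and fullness simultaneously via the cut-downs. If the direct construction fails, I would fall back on the Kirchberg--R{\o}rdam characterization of the Global Glimm Property via $2[b]\le[a]$ inside hereditary subalgebras.
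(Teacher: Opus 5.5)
The paper does not prove this statement. It is quoted from Thiel--Vilalta \cite{HV}, so your outline has to stand on its own, and most of it does. $(1)\Leftrightarrow(2)$ is a reformulation of the definition, and your halving argument for $(3)\Rightarrow(4)$ is correct. $(3)\Rightarrow(2)$ also works. The second application of R{\o}rdam's lemma makes $v^*v$ full only in the ideal of $(b-\delta-\delta')_+$, not of $(b-\delta)_+$, but this is harmless if you choose $\delta+\delta'$ small by compact containment. You can also avoid the second cut-down entirely: take $r$ with $r^*r=(b-\delta)_+\oplus(b-\delta)_+$ and $rr^*\in\overline{aAa}$, and put $v=r\,(e_{21}\otimes(b-\delta)_+)\,r^*$. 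Then $v^2=0$ and $v^*v\sim(b-\delta)_+$ directly.

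The genuine gap is the sentence that sets up $(2)\Rightarrow(3)$, namely that the Global Glimm Property passes to $A\otimes\mathcal K$. Hypothesis (2) concerns only $a\in A_+$. So your argument actually proves only that classes $[a]$ with $a\in A_+$ are $(2,\omega)$-divisible, whereas (3) is about every element of ${\rm Cu}(A)$.

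This is not a formality, for two reasons.
\begin{enumerate}
\item Such classes do not generate ${\rm Cu}(A)$, even under sums and suprema. The Bott projection in $M_2(C(S^2))$ has a compact class that is not a finite sum of classes of elements of $C(S^2)_+$. So no argument using only order and addition on $\{[a]: a\in A_+\}$ can close the gap.
\item The obvious reduction fails exactly where it matters. For $c\in M_n(A)_+$ you do have $[c_{ii}]\le[c]\le\sum_i[c_{ii}]$ with $c_{ii}\in A_+$. This gives $y_1,\dots,y_n$ with $2y_i\le[c]$ whose ideals jointly contain $(c-\varepsilon)_+$. But you need a single $y$ with $2y\le[c]$, and $2y_i\le[c]$ for all $i$ does not give $2(y_1+\dots+y_n)\le[c]$. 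Merging several halving elements into one is precisely what separates the Global Glimm Property from nowhere scatteredness (weak versus genuine $(2,\omega)$-divisibility).
\end{enumerate}

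Since ${\rm Cu}(A)\cong{\rm Cu}(A\otimes\mathcal K)$, stability of the Global Glimm Property under $\otimes\mathcal K$ is naturally a corollary of the very characterization you are proving. Quoting it here is therefore circular unless you supply, or cite, an independent C*-algebraic proof that (2) for $A$ implies (2) for $M_n(A)$. Without that, $(1)\Rightarrow(3)$ is not established.
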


\begin{theorem} {\rm (\cite{HV}.)} \label{thm:2.14} The Global Glimm property passes to hereditary Sub- ${\rm C^*}$-algebras and quotients.
Further, if $I$ is an ideal in a  ${\rm C^*}$-algebra $A$, then $A$ has the Global Glimm property if and only if $I$ and $A/I$ do.

\end{theorem}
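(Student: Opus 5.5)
The statement is Theorem 2.14, quoted from \cite{HV}. My plan is to derive it from the characterizations in Theorem \ref{thm:2.13}, using condition $(2)$ for the permanence statements and, for the extension statement, a mix of $(2)$ and $(3)$/$(4)$.

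\emph{Hereditary subalgebras and ideals.} Let $B\subseteq A$ be hereditary and $a\in B_+$. Then $\overline{aBa}=\overline{aAa}$. Condition $(2)$ for $A$ gives $\varphi\colon {\rm M}_2(C_0(0,1])\to\overline{aBa}$ with $(a-\varepsilon)_+$ in the closed ideal $J$ of $A$ generated by $S=\varphi({\rm M}_2(C_0(0,1]))$. It remains to see that $(a-\varepsilon)_+$ lies in the ideal of $B$ generated by $S$. I would use the standard identity $B\cap J=\overline{BJB}$. For $s\in S_+$, write $s=s^{1/3}s^{1/3}s^{1/3}$, so that elements $b\,x\,s\,y\,b'$ can be rewritten as products of elements of the form $b x s^{1/3}$. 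Since $B$ is hereditary, these lie in $\overline{BSB}$-type expressions inside $B$ (using $BAB\subseteq B$). Hence $(a-\varepsilon)_+\in\overline{BSB}$, which verifies $(2)$ for $B$. Ideals are hereditary, so they are covered as well.

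\emph{Quotients.} Let $\pi\colon A\to A/I$ be the quotient map. Given $\bar a\in(A/I)_+$, lift it to $a\in A_+$. Apply $(2)$ in $A$ and compose with $\pi$. Since $\pi(\overline{aAa})=\overline{\bar a(A/I)\bar a}$ and $\pi((a-\varepsilon)_+)=(\bar a-\varepsilon)_+$, and $\pi$ maps the ideal generated by $S$ onto the ideal generated by $\pi(S)$, condition $(2)$ holds for $A/I$.

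\emph{Extensions.} Assume $I$ and $A/I$ have the Global Glimm property, and fix $a\in A_+$ and $\varepsilon>0$.

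1. Apply $(2)$ in $A/I$ to $\pi(a)$ to get $\bar\varphi$.
2. The cone ${\rm M}_2(C_0(0,1])$ is projective, and $\overline{aAa}\to\overline{\pi(a)(A/I)\pi(a)}$ is surjective. So $\bar\varphi$ lifts to $\varphi\colon {\rm M}_2(C_0(0,1])\to\overline{aAa}$, and $(a-\varepsilon)_+\in J+I$, where $J$ is the ideal generated by the image of $\varphi$.
3. Pass to $A/J$. There the image of $(a-\varepsilon)_+$ lies in $(I+J)/J\cong I/(I\cap J)$, which has the Global Glimm property by the two permanence results above.
4. Apply $(2)$ there and lift again by projectivity. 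This gives $\psi\colon{\rm M}_2(C_0(0,1])\to\overline{(a-\varepsilon)_+A(a-\varepsilon)_+}\subseteq\overline{aAa}$ such that $(a-2\varepsilon)_+$ lies in the ideal generated by the images of $\varphi$ and $\psi$ together.

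\emph{Main obstacle.} The difficulty is turning two homomorphisms into the single one demanded by $(2)$, or equivalently producing one $x$ with $2x\le[a]$. Setting $x=[\varphi(e_{11}\otimes\iota)]+[\psi(e_{11}\otimes\iota)]$ only gives $2x\le 2[a]$. I would first try to work in ${\rm Cu}$ using $(4)$ with $n=4$ in place of $(2)$. Take $4$-divisible pieces $x_1$ (from the quotient step) and $x_2$ (in ${\rm Cu}(I)$), and arrange $2x_2$ to sit below a part of $[a]$ complementary to $2x_1$. Concretely, I would first choose $\psi$ inside the hereditary subalgebra of a cut-down of $a$ that is orthogonal to a functional-calculus cut-off of $\varphi$'s image, so that $2x_1+2x_2\le[a]$. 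Then $[(a-2\varepsilon)_+]\le n(x_1+x_2)$ for some $n$, which is $(2,\omega)$-divisibility via $(3)$. If this orthogonality cannot be arranged directly, the fallback is to use the exactness of ${\rm Cu}(I)\to{\rm Cu}(A)\to{\rm Cu}(A/I)$: lift $\bar x$ with $4\bar x\le\bar u$ to some $x_1$ with $2x_1\le u$ modulo $\infty\cdot{\rm Cu}(I)$, and absorb the error using divisibility in ${\rm Cu}(I)$.
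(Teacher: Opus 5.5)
\textbf{Verdict: genuine gap.} The paper gives no proof of this statement; it only quotes \cite{HV}. So your argument has to stand on its own. Your permanence arguments for hereditary subalgebras and quotients are fine, as are steps 1--4 of the extension part. The gap is the step you call the main obstacle: you leave it as a plan plus a fallback (``absorb the error using divisibility in ${\rm Cu}(I)$'') that only restates it.

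This is not bookkeeping. A general rule merging $x_1,x_2$ with $2x_i\le[a]$ into one $x$ with $2x\le[a]$ and $x_1,x_2\le\infty x$ would give the Global Glimm property for every nowhere scattered C*-algebra, since those supply finitely many such $x_i$. That would settle the long-standing Global Glimm Problem, so the extension structure has to be used essentially.

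Moreover, your concrete plan fails. Write ${\rm Ideal}(X)$ for the closed ideal generated by $X$, $h=\varphi(1\otimes\iota)$ and $J_t={\rm Ideal}((h-t)_+)$. Take $A=C([0,1],M_{2^\infty})$, $I=C_0((0,1],M_{2^\infty})$, $a=1$. Take the lift $\varphi(e_{ij}\otimes f)(s)=f(1-s)\,e_{ij}\otimes 1$, which lifts $e_{ij}\otimes f\mapsto f(1)\,e_{ij}\otimes 1$ in $A/I\cong M_{2^\infty}$. Then $J_t=C_0([0,1-t),M_{2^\infty})$, while every $c\in A$ orthogonal to $(h-t)_+$ vanishes on $[0,1-t]$. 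Hence, for every $t$ and every $\psi$ into such a complement, $J_t+{\rm Ideal}(\psi)$ consists of functions vanishing at $s=1-t$. It never contains $(a-\varepsilon')_+$: the primitive ideals $P$ with $\|h+P\|=t$ are covered by neither piece.

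The missing idea is to keep a full orthogonal copy of the quotient data inside the complement. The construction goes as follows.
\begin{enumerate}
\item With $d_k=\varphi(e_{kk}\otimes\iota)$, apply the Glimm property of $A/I$ once more, to $\pi(d_1)$ with small tolerance. Lift the result to $\chi\colon{\rm M}_2(C_0(0,1])\to\overline{d_1Ad_1}$.
\item Redefine $h=\chi(1\otimes\iota)$. Then $(a-2\varepsilon)_+\in{\rm Ideal}(h)+I$, so $(a-3\varepsilon)_+\in J_t+I$ for some $t>0$.
\item Put $c=(1-g(h))a(1-g(h))$ with $g(s)=\min(s/t,1)$. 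Then $c\perp(h-t)_+$, and $d_2\perp h$ gives $d_2\in\overline{cAc}$.
\item Since ${\rm Ideal}(d_2)={\rm Ideal}(d_1)\ni h$ and $a-c\in{\rm Ideal}(h)$, every primitive ideal containing $c$ contains $a$. So ${\rm Ideal}(c)={\rm Ideal}(a)$, and therefore $(a-3\varepsilon)_+\in J_t+({\rm Ideal}(c)\cap I)$.
\item Apply the Glimm property of the hereditary subalgebra $\overline{cIc}$ of $I$ to a suitable element, after a small cut-down. This gives $\psi$ into $\overline{cAc}$ with $(a-4\varepsilon)_+\in J_t+{\rm Ideal}(\psi)$.
\item Let $\theta_t$ be the endomorphism of $C_0(0,1]$ with $\theta_t(\iota)=(\iota-t)_+$. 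Then $\chi\circ({\rm id}\otimes\theta_t)+\psi$ is an orthogonal sum, hence a single homomorphism into $\overline{aAa}$ whose image generates an ideal containing $(a-4\varepsilon)_+$.
\end{enumerate}
Your remark about $4$-divisible pieces points in this direction. But the proposal never uses the extra copy to cover the boundary of the cut-off, which is exactly where it breaks.
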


The following theorem was obtained by Seth and  Vilalta in  \cite{SV3}.
\begin{theorem}{\rm (\cite{SV3}.)}\label{thm:2.15}
Let \( A \) be a ${\rm C^*}$-algebra, and let \( m, M \in \mathbb{N} \). Assume that \( A \) has \( m \)-comparison and that, for every $N\ge 1$ and for every pair of elements \( x', x \in {\rm{Cu}}(A) \) such that \( x' \ll x \), there exists \( y \in {\rm{Cu}}(A) \) satisfying
$x' \ll N y \ll M x$.

Then, the following are equivalent:

    $(1)$ \( A \) has the Global Glimm Property;

   $(2)$ there exists \( L \in \mathbb{N} \) such that, for every pair \( x', x \in {\rm {Cu}}(A) \) with \( x' \ll x \), there exist \( y_0, y_1 \in {\rm{Cu}}(A) \) satisfying
    $y_0 + y_1 \leq x,  \quad x' \ll Ly_0, Ly_1$;

    $(3)$  \( A \) is pure.

\end{theorem}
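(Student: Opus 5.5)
The plan is to prove the cycle $(3)\Rightarrow(1)\Rightarrow(2)\Rightarrow(3)$, keeping in mind that $m$-comparison and the standing hypothesis (for every $N$ and every $x'\ll x$ there is $y$ with $x'\ll Ny\ll Mx$) are assumed throughout.

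The implication $(3)\Rightarrow(1)$ is formal. Pureness includes almost divisibility, that is, $0$-almost divisibility in the sense of Definition \ref{def:2.7}. Take $k=2$ there: for $u'\ll u$ we obtain $b$ with $2b\leq u$ and $u'\leq 3b$. This is exactly $(2,\omega)$-divisibility in Definition \ref{def:2.12}, so Theorem \ref{thm:2.13} $(3)\Rightarrow(1)$ gives the Global Glimm Property.

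For $(2)\Rightarrow(3)$ I would use the equivalence between $(m,n)$-purity and purity from \cite{APRTHV3}, \cite{APTV} quoted in the introduction. Since $A$ already has $m$-comparison, it suffices to produce $n$-almost divisibility for some fixed $n$.
\begin{enumerate}
\item Iterate $(2)$. Given $x'\ll x'' \ll x$, split $x''$ into $y_0+y_1\leq x''$ with $x'\ll Ly_0$ and $x'\ll Ly_1$. Splitting each $y_i$ again, and continuing, after $j$ rounds we get $2^j$ elements whose sum is at most $x$, each satisfying $x'\leq L^j(\text{element})$.
\item The factor $L^j$ grows faster than the $(k+1)(n+1)$ allowed in Definition \ref{def:2.7}, so it must be removed. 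I would do this with $m$-comparison. The $2^j$ pieces $w_i$ come in groups, and within a group they are comparable to one another up to factors of $L$. Choose a single $b$ below each of $m+1$ disjoint groups (using $(k+1)b<_s$ each group sum), then apply $m$-comparison to get $x'\leq (m+1)\cdot(\text{const})\cdot b$ with a constant independent of $k$.
\item Use the auxiliary hypothesis $x'\ll Ny\ll Mx$ to pass from "sum $\leq Mx$" back to "$\leq x$". It is applied with $N$ chosen as a multiple of $k(m+1)$, so that the extra factor $M$ is absorbed into the constant $n$.
\end{enumerate}

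The main obstacle is $(1)\Rightarrow(2)$, because the Global Glimm Property (via Theorem \ref{thm:2.13}(4)) only yields non-uniform divisibility: for $u'\ll u$ one gets $z$ with $2z\leq u$ and $u'\leq n_{u'}z$, where $n_{u'}$ depends on $u'$. I would first try the following route to make the constant uniform.
\begin{enumerate}
\item Given $x'\ll x$, apply the hypothesis with $N=m+1$ (or a small multiple of it) to get $y$ with $x'\ll (m+1)y\ll Mx$.
\item Apply $(2,\omega)$-divisibility not to $x$ but to $y$ and to suitable multiples of it. This produces $z$ with $2z\leq y$ and $y'\leq n z$ for a cut-down $y'\ll y$.
\item Use $m$-comparison to show that $x'$ is dominated by $(m+1)$ copies of something below $z$, with a constant controlled only by $M$ and $m$. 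The point is that each $z$-piece satisfies $x'<_s c\,z$ for a fixed $c$ depending on $M$ and $m$; this is where the $<_s$ relation, rather than a plain inequality, is used.
\item Set $y_0=y_1=z$ (after rescaling $M x$ back below $x$ by one more application of the hypothesis) and read off $L$ as a function of $m$ and $M$ only.
\end{enumerate}
I expect that tracking the $\ll$-cut-downs so that every inequality holds exactly, rather than only after an $\varepsilon$-perturbation, will need the reformulations in Theorems \ref{thm:2.9}–\ref{thm:2.10} at the level of ${\rm W}(A)$.
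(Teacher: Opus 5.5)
\textbf{Overall.} The paper gives no proof of this statement; it is quoted from Seth--Vilalta, so your argument has to stand on its own. Your $(3)\Rightarrow(1)$ is fine. The real content of the theorem is $(1)\Rightarrow(2)$, and that implication is not proved.

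\textbf{The gap in $(1)\Rightarrow(2)$.} In your step 3 you assert $x'<_s c\,z$ with $c$ depending only on $m$ and $M$. But the only relations you have are $x'\ll (m+1)y$, $2z\le y$, and $y'\le nz$, where $n$ is supplied by the Global Glimm Property. These give $x'\le (m+1)n\,z$ and nothing better. The Global Glimm Property controls only the ideal generated by $z$, not its size. Nor can $m$-comparison supply the missing lower bound: it only converts relations $a<_s b_j$ that you already have into $a\le b_0+\cdots+b_m$.

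Step 4 fails as well. You have $2z\le y$, but $y$ is only controlled by $(m+1)y\ll Mx$. Every further use of the standing hypothesis again bounds things by $Mx$, never by $x$; with $m$-comparison, $(M+1)w\le Mx$ only yields $w\le (m+1)x$. To land below $x$ you must apply $m$-comparison against a decomposition $b_0+\cdots+b_m\le x$ with $x'\le Cb_t$ for a \emph{uniform} $C$. That is precisely the uniform divisibility in $(2)$, with $m+1$ pieces instead of $2$.

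Suppose you instead use the non-uniform pieces from the Global Glimm Property, say $(m+1)z\le x$ and $x''\le pz$ for some $x'\ll x''\ll x$. Then this route only yields $y_0=y_1$ with $x'\le 2(Mp+1)y_0$, a constant depending on $p$. So the argument is circular exactly where the work lies. You need a genuinely new mechanism that turns the ideal-theoretic information of the Global Glimm Property into uniform size control, and the proposal does not provide one.

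\textbf{Repairing $(2)\Rightarrow(3)$.} This part has the right ingredients but is assembled incorrectly. Iterating $(2)$ about $\log_2 k$ times creates the factor $L^j$, which you then cannot remove. Also, in your step 2, $m$-comparison is used in the wrong direction. It gives upper bounds such as $kb\le a$, not the lower bound $a'\le Cb$; that lower bound must come from the standing hypothesis. The fix runs as follows.
\begin{enumerate}
\item Iterate $(2)$ only $j=\lceil\log_2(m+1)\rceil$ times, a number independent of $k$, choosing intermediate cut-downs at each round. Starting from $a'\ll a''\ll a$, this gives $g_0+\cdots+g_m\le a$ with $a''\le L^j g_t$ for every $t$.
\item Apply the hypothesis to $a'\ll a''$ with $N=(ML^j+1)k$. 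This gives $y$ with $a'\ll Ny\ll Ma''$.
\item Then $(ML^j+1)\,ky\le ML^j g_t$, so $ky<_s g_t$ for every $t$.
\item By $m$-comparison, $ky\le g_0+\cdots+g_m\le a$, while $a'\le (ML^j+1)(k+1)y$.
\end{enumerate}
Thus $A$ is $ML^j$-almost divisible. Together with $m$-comparison, the $(m,n)$-pureness theorem then gives pureness.
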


Building on Theorem \ref{thm:2.15}, we introduce the following definition.

\begin{definition}\label{def:2.16}
	Let $A$ be a unital ${\rm C^*}$-algebra and let $\operatorname{Cu}(A)$ be its Cuntz semigroup. We say that $\operatorname{Cu}(A)$ is weak {$M$-divisible}  if  for every $N\ge 1$ and for every pair of elements \( x', x \in {\rm {Cu}}(A) \) such that \( x' \ll x \), there exists \( y \in {\rm Cu}(A) \) satisfying
$x' \ll N y \ll M x$.
	\end{definition}

The proof of the following theorem is straightforward and thus omitted.

\begin{theorem}\label{thm:2.17}
Let $A$ be a unital C*-algebra. Then  $\operatorname{Cu}(A)$ is weak {$M$- divisible} if and only if  for every $N\ge 1$ and for every pair of elements \( x', x \in {\rm Cu}(A) \) such that \( x'\ll  x \), there exists \( y \in {\rm Cu}(A) \) satisfying
$x' \leq N y \leq  M x$.
\end{theorem}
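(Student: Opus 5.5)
The equivalence in Theorem~\ref{thm:2.17} compares the $\ll$ conditions of Definition~\ref{def:2.16} with the corresponding $\leq$ conditions. I would prove the two implications separately, using only the standard Cu-semigroup axioms: every element is the supremum of a $\ll$-increasing sequence, and $\ll$ is compatible with addition. I would also use the elementary facts that $a\ll b$ implies $a\leq b$, and that $a\leq b\ll c$ or $a\ll b\leq c$ implies $a\ll c$.

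\emph{($\Rightarrow$).} Assume $\operatorname{Cu}(A)$ is weak $M$-divisible. Let $N\geq 1$ and $x'\ll x$. Definition~\ref{def:2.16} gives $y$ with $x'\ll Ny\ll Mx$. Since $\ll$ implies $\leq$, we get $x'\leq Ny\leq Mx$, which is the required condition.

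\emph{($\Leftarrow$).} Assume the $\leq$ version holds, and let $N\geq 1$ and $x'\ll x$.
\begin{enumerate}
\item Write $x=\sup_n x_n$ with $x_n\ll x_{n+1}$. Since $x'\ll x$, there is $n$ with $x'\leq x_n$. Set $x''=x_{n+1}$, so that
$$x'\leq x_n\ll x''\ll x.$$
In particular $x'\ll x''\ll x$.
\item Apply the hypothesis to the pair $x''\ll x$. This gives $y$ with $x''\leq Ny\leq Mx$. Then $x'\ll x''\leq Ny$, so $x'\ll Ny$.
\item It remains to upgrade $Ny\leq Mx$ to a $\ll$ relation. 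Write $y=\sup_m y^{(m)}$ with $y^{(m)}\ll y^{(m+1)}$. Then $Ny=\sup_m Ny^{(m)}$, and $Ny^{(m)}\ll Ny^{(m+1)}$ because $\ll$ is additive. Since $x'\ll Ny$, there is $m$ with $x'\leq Ny^{(m)}$. As $x'\ll x''\leq Ny$, we may take $m$ large enough that in fact $x'\ll Ny^{(m)}$. Replace $y$ by $y^{(m+1)}$. Then
$$x'\ll Ny^{(m+1)}\quad\text{and}\quad Ny^{(m+1)}\ll Ny\leq Mx.$$
\end{enumerate}
This gives $x'\ll N y^{(m+1)}\ll Mx$, as Definition~\ref{def:2.16} requires.

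\emph{Main obstacle.} The only delicate point is step~3, where $Ny\leq Mx$ must become $Ny\ll Mx$ while $x'\ll Ny$ is kept. The interpolation in step~1 creates the room for this. Shrinking $y$ to an element way below it keeps $x'\ll N(\cdot)$, because $x'$ is compact relative to the increasing sequence $(Ny^{(m)})_m$. At the same time it makes $N(\cdot)$ way below $Mx$. If one prefers to stay at the level of positive elements, the same shrinking step can instead be done by replacing $y$ with $(y-\varepsilon)_+$ for small $\varepsilon>0$.
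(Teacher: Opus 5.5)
Your argument is correct and complete. Interpolating $x'\ll x''\ll x$ before applying the $\leq$-hypothesis, and then replacing $y$ by some $y^{(m+1)}\ll y$ with $x'\leq Ny^{(m)}\ll Ny^{(m+1)}$, uses only the Cu-semigroup axioms and gives $x'\ll Ny^{(m+1)}\ll Ny\leq Mx$. The paper omits this proof as ``straightforward,'' and your argument is the natural one that fills that omission; the unital hypothesis plays no role.
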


The proofs of the following theorems, analogous to that of Theorem \ref{thm:2.10}, were established by Archey, Buck, Mohammadkarimi, Phillips, and Seth in \cite{AJJN}, and thus are omitted here.

\begin{theorem}\label{thm:2.18}
Let $A$ be a
${\rm C^*}$-algebra and let $M\in \mathbb{Z}_{>0}$. Then ${\rm Cu}(A)$ is weak {$M$- divisible} if and only if  for every $a\in {\rm M}_{\infty}(A)_+$ , every $N\in \mathbb{Z}_{>0}$ and every $\varepsilon>0$, there exist $b\in  {\rm M}_{\infty}(A)_+$ such that in ${\rm W}(A)$ we have satisfying
$[(a-\varepsilon)_+]\leq N y \leq  M x$.
 \end{theorem}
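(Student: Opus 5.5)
We read Theorem \ref{thm:2.18} as follows: ${\rm Cu}(A)$ is weak $M$-divisible if and only if for every $a\in{\rm M}_\infty(A)_+$, every $N$ and every $\varepsilon>0$ there is $b\in{\rm M}_\infty(A)_+$ with $[(a-\varepsilon)_+]\leq N[b]\leq M[a]$ in ${\rm W}(A)$. We follow the pattern of Theorem \ref{thm:2.10}, passing between ${\rm Cu}(A)$ and ${\rm W}(A)$ with the cut-down trick from the proof of Lemma \ref{lem:2.3}, and using Theorem \ref{thm:2.17} to replace $\ll$ by $\leq$.

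\emph{Forward direction.} Let $a\in{\rm M}_\infty(A)_+$ and $\varepsilon>0$. Since $[(a-\varepsilon)_+]\ll[a]$ in ${\rm Cu}(A)$, Definition \ref{def:2.16} gives $y\in{\rm Cu}(A)$ with
$$[(a-\varepsilon)_+]\ll Ny\ll M[a].$$
Write $y=[c]$ with $c\in(A\otimes\mathcal K)_+$. Because $[(a-\varepsilon)_+]\ll N[c]=\sup_\delta N[(c-\delta)_+]$, there is $\delta>0$ with $[(a-\varepsilon)_+]\leq N[(c-\delta)_+]$. By Lemma 1.9 of \cite{P3}, there is $b\in{\rm M}_\infty(A)_+$ with $[b]=[(c-\delta)_+]$. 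Then
$$[(a-\varepsilon)_+]\leq N[b]\leq N[c]\leq M[a].$$
All elements involved lie in ${\rm M}_\infty(A)_+$, and ${\rm W}(A)$ sits inside ${\rm Cu}(A)$ with the induced order. Hence these inequalities already hold in ${\rm W}(A)$.

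\emph{Converse direction.} Let $x'\ll x$ in ${\rm Cu}(A)$ and fix $N$. By Theorem \ref{thm:2.17} it suffices to find $y$ with $x'\leq Ny\leq Mx$.
\begin{enumerate}
\item Write $x=[c]$ with $c\in(A\otimes\mathcal K)_+$ and choose $\eta>0$ with $x'\leq[(c-\eta)_+]$.
\item Use Lemma 1.9 of \cite{P3}, exactly as in Lemma \ref{lem:2.3}, to find $a\in{\rm M}_\infty(A)_+$ with $[a]=[(c-\eta/2)_+]$.
\item Since $[(c-\eta)_+]\ll[(c-\eta/2)_+]=[a]=\sup_\varepsilon[(a-\varepsilon)_+]$, pick $\varepsilon>0$ with $[(c-\eta)_+]\leq[(a-\varepsilon)_+]$.
\item Apply the hypothesis to $a$, $N$ and $\varepsilon$ to get $b\in{\rm M}_\infty(A)_+$ with $[(a-\varepsilon)_+]\leq N[b]\leq M[a]$ in ${\rm W}(A)$.
\end{enumerate}
Setting $y=[b]$, we obtain
$$x'\leq[(c-\eta)_+]\leq[(a-\varepsilon)_+]\leq N[b]\leq M[a]\leq Mx.$$
Theorem \ref{thm:2.17} then shows that ${\rm Cu}(A)$ is weak $M$-divisible.

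The main obstacle is the approximation step: replacing an arbitrary element of $(A\otimes\mathcal K)_+$ by a Cuntz-equivalent element of ${\rm M}_\infty(A)_+$ after a small cut-down, and keeping track of $\ll$ versus $\leq$. Lemma 1.9 of \cite{P3} and Theorem \ref{thm:2.17} together handle this. Everything else is bookkeeping with the monotonicity of $(\cdot-\varepsilon)_+$ and with suprema of increasing sequences in ${\rm Cu}(A)$.
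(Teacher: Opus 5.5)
Your proof is correct and follows the argument the paper has in mind when it omits the proof as analogous to Theorem \ref{thm:2.10}: you replace cut-downs $(c-\delta)_+$ by Cuntz-equivalent elements of ${\rm M}_\infty(A)_+$ (exactly as in Lemma \ref{lem:2.3}), use that ${\rm W}(A)\to{\rm Cu}(A)$ is an order embedding, and let Theorem \ref{thm:2.17} handle the passage between $\leq$ and $\ll$. Your reading of the garbled conclusion as $[(a-\varepsilon)_+]\leq N[b]\leq M[a]$ is the intended one.
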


Building on Proposition 4.2 in \cite{PTV1}, we present the following definition.

\begin{definition}\label{def:2.19} Let $A$ be a unital C*-algebra and let $m\in \mathbb{N}$.
We say that weakly $M$-comparison if for any $x,y\in {\rm Cu}(A)$ such that $x<_{s}y$ implies $x\leq My$.
\end{definition}

The proofs of the following theorem, analogous to that of Theorem \ref{thm:2.9}, were established by Archey, Buck, Mohammadkarimi, Phillips, and Seth in \cite{AJJN}, and thus are omitted here.

\begin{theorem}\label{thm:2.20}
Let $A$ be a
${\rm C^*}$-algebra and let $n\in \mathbb{Z}_{>0}$.  Then ${\rm Cu}(A)$ has  weakly $M$-comparison if and only if ${\rm W}(A)$ has   weakly $M$-comparison.
\end{theorem}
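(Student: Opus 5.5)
The plan is to follow the strategy of the proof of Theorem \ref{thm:2.9} in \cite{AJJN}, transferring the relation $x<_s y$ between ${\rm Cu}(A)$ and ${\rm W}(A)$ through cut-downs $(a-\varepsilon)_+$. One direction, ${\rm Cu}(A)\Rightarrow{\rm W}(A)$, is essentially formal. ${\rm W}(A)$ sits inside ${\rm Cu}(A)$ via $M_\infty(A)_+\subseteq (A\otimes\mathcal K)_+$, and this inclusion is an order embedding that preserves addition. So if $x,y\in{\rm W}(A)$ satisfy $(k+1)x\leq ky$ in ${\rm W}(A)$, the same inequality holds in ${\rm Cu}(A)$. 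Weakly $M$-comparison in ${\rm Cu}(A)$ (Definition \ref{def:2.19}) then gives $x\leq My$ in ${\rm Cu}(A)$, and hence in ${\rm W}(A)$.

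For the converse, let $x=[a]$ and $y=[b]$ in ${\rm Cu}(A)$ with $(k+1)x\leq ky$. It suffices to show $[(a-\varepsilon)_+]\leq M[b]$ for every $\varepsilon>0$, since $[a]=\sup_\varepsilon[(a-\varepsilon)_+]$. Fix $\varepsilon$. I would pass to a strict version of the hypothesis in ${\rm W}(A)$ as follows.

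\textbf{Step 1: strict inequality in ${\rm Cu}(A)$.} Because $(k+1)[(a-\varepsilon/2)_+]\ll (k+1)[a]\leq k[b]=\sup_\delta k[(b-\delta)_+]$, there is $\delta>0$ with
$$(k+1)[(a-\varepsilon/2)_+]\leq k[(b-\delta)_+].$$

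\textbf{Step 2: move to ${\rm W}(A)$.} By Lemma 1.9 in \cite{P3} (as used in Lemma \ref{lem:2.3}), choose $a',b'\in M_\infty(A)_+$ with $[a']=[(a-\varepsilon/2)_+]$ and $[b']=[(b-\delta)_+]$. The inequality $(k+1)[a']\leq k[b']$ holds in ${\rm Cu}(A)$, hence in ${\rm W}(A)$ by the order embedding. Thus $[a']<_s[b']$ in ${\rm W}(A)$, and weakly $M$-comparison in ${\rm W}(A)$ gives $[a']\leq M[b']$.

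\textbf{Step 3: conclude.} Combining,
$$[(a-\varepsilon)_+]\leq [a']\leq M[b']\leq M[b]$$
in ${\rm Cu}(A)$. Letting $\varepsilon\to 0$ yields $x\leq My$.

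The main obstacle is the justification in Step 2 that every class of the form $[(c-\delta)_+]$ with $c\in(A\otimes\mathcal K)_+$ is represented by an element of $M_\infty(A)_+$, together with the fact that ${\rm W}(A)\to{\rm Cu}(A)$ is an order embedding. The first is exactly the content of Lemma 1.9 in \cite{P3}, which the paper already invokes. The second is standard: for elements of $M_\infty(A)_+$, Cuntz subequivalence computed in $A\otimes\mathcal K$ agrees with that computed in $M_\infty(A)$, by compressing with approximate units of $M_n(A)$. With these two facts in hand, everything else is bookkeeping with the way-below relation.
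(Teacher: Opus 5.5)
Your proof is correct and follows the route the paper indicates; the paper omits the argument entirely, saying only that it is analogous to the proof of Theorem \ref{thm:2.9} in \cite{AJJN}. Your write-up supplies that analogue: the direction from ${\rm Cu}(A)$ to ${\rm W}(A)$ via the order embedding, and the converse by passing to cut-downs $(a-\varepsilon/2)_+$ and $(b-\delta)_+$, represented in ${\rm M}_\infty(A)_+$ via Lemma 1.9 of \cite{P3} exactly as in Lemma \ref{lem:2.3}.
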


\section{The main results}

\begin{lemma}	\label{lem:3.1}
Let $0\to I\xrightarrow{i}A\xrightarrow{\pi}A/I\to0$ be a short exact sequence of  ${\rm C^*}$-algebras, and let $\{e_k\}_{k\in \mathbb{N}}$ be a quasi-central approximate identity  of $A$, i.e., $\{e_k\}$ is an approximate identity of $I$ and such that $\|e_kx-xe_k\|\to 0$ for all $a\in A$. For any $a, b\in A_+$ and $m, n\in \mathbb{N}$ satisfying $m[\pi(a)]\leq n[\pi(b)]$ in $A/I$, then for any $\varepsilon>0$, there exists a sufficiently large integer $k$ such that
\[
((1-e_k)^{\frac12}a(1-e_k)^{\frac12}-\varepsilon)_+\otimes I_m
\precsim ((1-e_k)^{\frac12}b(1-e_k)^{\frac12}-\tfrac\varepsilon4)_+\otimes I_n.
\]
Equivalently,
\[
m[((1-e_k)^{\tfrac12}a(1-e_k)^{\tfrac12}-\varepsilon)_+]
\leq n[((1-e_k)^{\tfrac12}b(1-e_k)^{\tfrac12}-\tfrac\varepsilon4\big)_+].
\]

\end{lemma}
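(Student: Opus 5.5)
Write $a_k=(1-e_k)^{1/2}a(1-e_k)^{1/2}$ and $b_k=(1-e_k)^{1/2}b(1-e_k)^{1/2}$. Everything will be done in ${\rm M}_N(A)$ with $N=\max\{m,n\}$, and with the ampliated quasi-central approximate unit $e_k\otimes 1_N$; this reduces the statement to the case $m=n=1$ applied to $a\otimes I_m$ and $b\otimes I_n$, padded with zeros. Note that $\pi(a_k)=\pi(a)$ and $\pi(b_k)=\pi(b)$, so the hypothesis gives $\pi(a\otimes I_m)\precsim \pi(b\otimes I_n)$ in ${\rm M}_N(A/I)$. We may therefore assume $m=n=1$, with $a,b\in {\rm M}_N(A)_+$, $\pi(a)\precsim\pi(b)$, and an approximate unit $(e_k)$ of ${\rm M}_N(I)$ that is quasi-central in ${\rm M}_N(A)$.

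\textbf{Step 1 (lift the subequivalence).} By the definition of Cuntz subequivalence there is $w\in {\rm M}_N(A/I)$ with $\|w\pi(b)w^*-\pi(a)\|<\varepsilon/8$. If we want to lose only $\varepsilon/4$ on the $b$ side, we can first use the standard fact (Kirchberg--R{\o}rdam) to arrange $\pi((a-\varepsilon/2)_+)= w\pi((b-\varepsilon/4)_+)w^*$ exactly, or at least approximately. Lift $w$ to $v\in{\rm M}_N(A)$. Then $x:=v(b-\varepsilon/4)_+v^*-(a-\varepsilon/2)_+$ satisfies $\|\pi(x)\|$ small, so $\|x-x^{1/2}e_kx^{1/2}\|$, and hence $\|(1-e_k)^{1/2}x(1-e_k)^{1/2}\|$, is small for large $k$. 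This uses the quotient-norm formula $\|\pi(x)\|=\lim_k\|(1-e_k)^{1/2}x(1-e_k)^{1/2}\|$, which holds for a quasi-central approximate unit.

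\textbf{Step 2 (commute $1-e_k$ past $v$).} By quasi-centrality, $(1-e_k)^{1/2}$ asymptotically commutes with $v$, with $a$, and with $b$. By continuity of functional calculus it also asymptotically commutes with $(a-\varepsilon/2)_+$ and $(b-\varepsilon/4)_+$. Moreover $(1-e_k)^{1/2}f(b)(1-e_k)^{1/2}$ is close to $f(b_k)$ for continuous $f$ vanishing at $0$. This follows by approximating $f$ by polynomials without constant term and using that $(1-e_k)^{1/2}b^j(1-e_k)^{1/2}\approx b_k^j$ as $k\to\infty$, which again relies on asymptotic commutation. Hence for large $k$,
\[
\|v(b_k-\varepsilon/4)_+v^*-(a_k-\varepsilon/2)_+\|<\varepsilon/2 .
\]

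\textbf{Step 3 (conclude).} From $\|c-d\|<\delta$ with $c,d\geq0$ we get $(d-\delta)_+\precsim c$ (R{\o}rdam's lemma). This gives
\[
((a_k-\varepsilon/2)_+-\varepsilon/2)_+=(a_k-\varepsilon)_+\precsim v(b_k-\varepsilon/4)_+v^*\precsim (b_k-\varepsilon/4)_+ .
\]
Returning to matrices, this is exactly $m[(a_k-\varepsilon)_+]\le n[(b_k-\varepsilon/4)_+]$.

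\textbf{Main obstacle.} The hardest point is Step 2. It requires bookkeeping the approximation of $(1-e_k)^{1/2}(\cdot)_+(1-e_k)^{1/2}$ by $(\cdot_k)_+$ uniformly, which needs quasi-centrality applied to the finitely many elements $a$, $b$, $v$. The other delicate point is choosing the $\varepsilon$-budget in Step 1 so that the $b$ side loses only $\varepsilon/4$. I would handle Step 2 first via polynomial approximation on $[0,\max(\|a\|,\|b\|)]$.
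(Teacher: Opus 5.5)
\textbf{Step 1 fails, and no choice of $\varepsilon$-budget can rescue it, because the statement as written is false.}

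Take $A=\mathbb{C}\oplus\mathbb{C}$, $I=0\oplus\mathbb{C}$, and $e_k=(0,1)$ for all $k$; this is the unit of $I$ and is central in $A$. Let $m=n=1$, $0<\varepsilon<1$, $a=(1,0)$ and $b=(\eta,0)$ with $0<\eta<\varepsilon/4$. Then $\pi(a)=1\sim\eta=\pi(b)$ in $\mathbb{C}$. But $(1-e_k)^{1/2}a(1-e_k)^{1/2}=(1,0)$ and $(1-e_k)^{1/2}b(1-e_k)^{1/2}=(\eta,0)$. So for every $k$ the left-hand side is $(1-\varepsilon,0)\neq 0$, while the right-hand side is $0$. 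Small $\varepsilon$ does not help either: with $a=(t,0)$ and $b=(t^2,0)$ in $C_0((0,1])\oplus\mathbb{C}$, the statement fails for every $\varepsilon<1/4$.

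In your argument this is exactly the request $\pi((a-\varepsilon/2)_+)=w\,\pi((b-\varepsilon/4)_+)\,w^*$; in the example the right side vanishes. The Kirchberg--R{\o}rdam lemma only cuts down the element being approximated. From $\|w\pi(b)w^*-\pi(a)\|<\varepsilon/8$ it gives $(\pi(a)-\varepsilon/8)_+=d\,w\pi(b)w^*d^*$, a comparison with $\pi(b)$ itself, not with a cut-down of it. Replacing $\pi(b)$ by $(\pi(b)-\varepsilon/4)_+$ moves $w\pi(b)w^*$ by up to $\|w\|^2\varepsilon/4$, and $\|w\|$ is not under control; in the example $|w|^2>(1-\varepsilon/8)/\eta$. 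In general $x\precsim y$ only gives, for each $\varepsilon$, \emph{some} $\delta=\delta(x,y,\varepsilon)>0$ with $(x-\varepsilon)_+\precsim(y-\delta)_+$.

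Your route is the paper's route, and the paper's proof breaks at the same point. It passes from $\|v\,b_kv^*-a_k\|<2\delta$ to $\|v(b_k-\varepsilon/4)_+v^*-a_k\|<2\delta+\varepsilon/4$, which tacitly assumes $\|v\|\le 1$.

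Step 2 contains a second, independent error: $(1-e_k)^{1/2}b^j(1-e_k)^{1/2}\approx b_k^j$ is false for $j\ge 2$. Asymptotic commutativity makes the left side close to $(1-e_k)b^j$ and the right side close to $(1-e_k)^jb^j$. But $e_k$ is not asymptotically a projection: for $A=C[0,1]$, $I=C_0([0,1))$ and $b=1$, one has $\|(1-e_k)-(1-e_k)^2\|=1/4$ for all large $k$. So $(1-e_k)^{1/2}f(b)(1-e_k)^{1/2}$ need not be close to $f(b_k)$. For $f(t)=(t-\eta)_+$ you only get an error $\le 2\eta$, and that error is again multiplied by $\|v\|^2$.

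What is true is the lemma with $\varepsilon/4$ replaced by some $\eta>0$ that depends on $a,b,m,n,\varepsilon$ but not on $k$. The clean way to get it is to cut down $b_k$ only at the very end:
\begin{enumerate}
\item Take $w$ with $\|w\pi(b)w^*-\pi(a)\|<\varepsilon/4$ and lift it to $v$.
\item Use quasi-centrality and $\lim_k\|(1-e_k)^{1/2}y(1-e_k)^{1/2}\|=\|\pi(y)\|$ to get $\|vb_kv^*-a_k\|<\varepsilon/2$ for large $k$.
\item Set $\eta=\varepsilon/(4\|v\|^2+4)$. Then $\|v(b_k-\eta)_+v^*-a_k\|<3\varepsilon/4$, so $(a_k-\varepsilon)_+\precsim(b_k-\eta)_+$.
\end{enumerate}
Your reduction to $m=n=1$ and your Step 3 are fine.
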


\begin{proof} For any $\varepsilon>0$, and  $m[\pi(a)]\leq n[\pi(b)]$.
 We may assume that $m\geq n$,  there exists $v=(v_{ij})\in {\rm M}_{m}(A)$ and a sufficiently small $\delta>0$ with $4\delta<\varepsilon$, satisfying
\[
\|\pi(v)(a\otimes I_m)\pi(v^*)-\pi(b\otimes I_n)\|<\delta,
\]
which is equivalent to
\[
\|\pi(v(a\otimes I_m)v^*-b\otimes I_n)\|<\delta.
\]
Since $\{e_k\}_{k\in \mathbb{N}}$ be a quasi-central   approximate identity of $A$.
We have  $$\lim\limits_{k\to\infty}\|e_kx-xe_k\|=0$$ for all $x\in A$.

 We can choose  sufficiently large $k$ and sufficiently small $\delta'>0$ with $4\sum_{i,j}\|v_{i,j}\|\delta'$ $<\delta$,
 and $4\delta'\leq \delta$. Moreover, choose another small constant $\delta''>0$ such that  $\|e_kx-xe_k\|<\delta''$ implies
$$\|e_k^{\tfrac12}x-xe_k^{\tfrac12}\|<\delta'.$$

Recall the quotient norm identity $$\|\pi(a)\|=\lim\limits_{k\to\infty}\|(1-e_k)a\|,$$ hence
\[
\big\|\pi\big(v(b\otimes I_m)v^*-a\otimes I_n\big)\big\|
=\lim_{k\to\infty}\big\|(1-e_k)\otimes I_m\big(v(b\otimes I_m)v^*-a\otimes I_n\big)\big\|.
\]
Therefore, we may take large enough $k$ such that
\[
\big\|(1-e_k)\otimes I_m\big(v(b\otimes I_m)v^*-a\otimes I_n\big)\big\|<\delta.
\]

Combining the estimate $\big\|(1-e_k)^{\tfrac12}x-x(1-e_k)^{\tfrac12}x\big\|<\delta'$, we compute the norm bound:
\[
\begin{aligned}
&\|v[(1-e_k)^{\tfrac12}b(1-e_k)^{\tfrac12}\otimes I_m]v^*
-(1-e_k)^{\tfrac12}a(1-e_k)^{\tfrac12}\otimes I_n\| \\
<&\,4\|v\|\delta'+\delta<2\delta.
\end{aligned}
\]
It further yields
$$
\|v[((1-e_k)^{\tfrac12}b(1-e_k)^{\tfrac12}\otimes I_m-\tfrac\varepsilon4)_+]v^*
-(1-e_k)^{\tfrac12}a(1-e_k)^{\tfrac12}\otimes I_n\|$$
$$<2\delta+1/4\varepsilon<1/2\varepsilon.$$

By the standard perturbation result for Cuntz comparison from small norm difference, we obtain
\[
[((1-e_k)^{\tfrac12}a(1-e_k)^{\tfrac12}\otimes I_n-\tfrac\varepsilon2)_{+}]\leq
[((1-e_k)^{\tfrac12}b(1-e_k)^{\tfrac12}\otimes I_m-\tfrac\varepsilon4)_+].
\]
Therefore one has  the desired inequality
\[
[((1-e_k)^{\tfrac12}a(1-e_k)^{\tfrac12}-\varepsilon\big)_+\otimes I_m]\leq
 [((1-e_k)^{\tfrac12}b(1-e_k)^{\tfrac12}-\tfrac\varepsilon4\big)_+\otimes I_n].
\]

\end{proof}

\begin{lemma}\label{lem:3.2}
Let $0\to I\xrightarrow{i}A\xrightarrow{\pi}A/I\to 0$ be a short exact sequence of ${\rm C^*}$-algebras, and let $\{e_k\}_{k\in \mathbb{N}}$ be a quasi-central approximate identity  of $A$. For any $a,b\in A_+$ and $m,n\in\mathbb{N}$ with $m[a]\leq n[b]$, then for arbitrary $\varepsilon>0$, there exists a sufficiently large integer $k$ such that the following inequality holds in $I$:
\[
m \left[(e_k^{\frac12}ae_k^{\frac12}-\varepsilon)_+\right]
\leq  n\left[(e_k^{\frac12}be_k^{\frac12}-\frac{\varepsilon}{4})_+\right].
\]

\end{lemma}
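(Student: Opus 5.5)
The plan is to imitate the proof of Lemma \ref{lem:3.1}, with $e_k^{1/2}$ in place of $(1-e_k)^{1/2}$. This is in fact simpler, because no passage to the quotient is needed: the hypothesis $m[a]\leq n[b]$ already holds in $A$.

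First, fix $\varepsilon>0$. Since $a\otimes I_m\precsim b\otimes I_n$ (padding with zeros so both sit in ${\rm M}_{\max\{m,n\}}(A)$), there is $v=(v_{ij})$ in a matrix algebra over $A$ with
\[
\|v(b\otimes I_n)v^*-a\otimes I_m\|<\delta ,
\]
where $\delta>0$ is small, say $4\delta<\varepsilon$. Next, using quasi-centrality and continuous functional calculus, I will choose $\delta'>0$ with $4\sum_{i,j}\|v_{ij}\|\,\delta'<\delta$. Then I take $k$ large enough that $\|e_k^{1/2}v_{ij}-v_{ij}e_k^{1/2}\|<\delta'$ for all entries $v_{ij}$. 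The passage from $\|e_kx-xe_k\|\to0$ to $\|e_k^{1/2}x-xe_k^{1/2}\|\to0$ is standard, by polynomial approximation of $t\mapsto t^{1/2}$ on $[0,1]$.

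Write $E=e_k^{1/2}\otimes I$. Conjugating the estimate above by $E$ gives $\|Ev(b\otimes I_n)v^*E-E(a\otimes I_m)E\|<\delta$. Commuting $E$ past $v$ and $v^*$ costs at most $2\|v\|\delta'\|b\|$-type terms; after normalizing $\|a\|,\|b\|\le1$ (or absorbing the norms into $\delta'$), this yields
\[
\|v\,(e_k^{1/2}be_k^{1/2}\otimes I_n)\,v^*-e_k^{1/2}ae_k^{1/2}\otimes I_m\|<2\delta .
\]
Replacing $e_k^{1/2}be_k^{1/2}\otimes I_n$ by its $\varepsilon/4$-cut-down changes the left side by at most $\|v\|^2\varepsilon/4$. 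To control this, I rescale $v$ beforehand, or better, apply the standard perturbation lemma: $\|x-y\|<\eta$ implies $(x-\eta)_+\precsim y$, applied with $y$ the cut-down. Together with R{\o}rdam's lemma, $(cdc^*-\eta)_+\precsim c\,(d-\eta')_+c^*$-style estimates, this gives
\[
(e_k^{1/2}ae_k^{1/2}\otimes I_m-\varepsilon)_+\precsim (e_k^{1/2}be_k^{1/2}\otimes I_n-\varepsilon/4)_+ .
\]

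Finally, all elements $e_k^{1/2}ae_k^{1/2}$ and $e_k^{1/2}be_k^{1/2}$ lie in $I$, since $e_k\in I$ and $I$ is an ideal. Because $I$ is hereditary in $A$, Cuntz subequivalence between elements of $I\otimes\mathcal K$ computed in $A$ is the same as in $I$. So the inequality holds in ${\rm Cu}(I)$, giving the stated form $m[\cdot]\le n[\cdot]$.

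The main obstacle is the bookkeeping in the cut-down step: $\|v\|$ is not controlled a priori, so the $\varepsilon/4$ shift on the $b$-side must be handled by a Cuntz-comparison perturbation argument rather than a naive norm estimate. I would handle it by first passing to $(a-\varepsilon/2)_+\precsim b$, choosing $v$ with $(a-\varepsilon/2)_+\otimes I_m= w(b\otimes I_n)w^*$ exactly up to a cut-down (R{\o}rdam's Proposition 2.4), and then controlling all constants in terms of $\|w\|$ before choosing $k$.
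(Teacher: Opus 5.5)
\paragraph{Verdict.} Your strategy matches the paper's: fix $v$ witnessing $a\otimes I_m\precsim b\otimes I_n$ up to $\delta$, take $k$ large so that $e_k^{1/2}$ nearly commutes with the entries of $v$, conjugate, and use that $I$ is hereditary. The argument is sound up to
\[
\|v\,(e_k^{1/2}be_k^{1/2}\otimes I_n)\,v^*-e_k^{1/2}ae_k^{1/2}\otimes I_m\|<2\delta ,
\]
provided $k$ is chosen after $v$, so the commutator errors absorb $\|v\|$ and $\|b\|$. Do not rescale $a$ and $b$ separately, since that changes which cut-downs vanish. The gap is exactly the step you flagged, and no bookkeeping closes it: the statement with the fixed cut $\varepsilon/4$ on the $b$-side is false.

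\paragraph{Counterexample.} Take $A=I=C_0((0,1])$, with $e_k(t)=\min(1,kt)$; this is quasi-central because $A$ is commutative. Let $a(t)=t$, $b=\frac{\varepsilon}{8}a$, $m=n=1$ and $0<\varepsilon<1$. Then $[a]=[b]$. But $\|e_k^{1/2}be_k^{1/2}\|=\varepsilon/8$, so $(e_k^{1/2}be_k^{1/2}-\varepsilon/4)_+=0$. Meanwhile $\|e_k^{1/2}ae_k^{1/2}\|=1$, so $(e_k^{1/2}ae_k^{1/2}-\varepsilon)_+\neq 0$ for every $k$.

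\paragraph{Why your repair fails.} R{\o}rdam's Proposition 2.4 gives $(a-\varepsilon/2)_+=w(b-\eta)_+w^*$ only for \emph{some} $\eta>0$ determined by $a$, $b$ and $\varepsilon$. You cannot prescribe $\eta=\varepsilon/4$. Equivalently, cutting $b$ down by $\eta$ costs up to $\|v\|^2\eta$ in norm, and $\|v\|$ must grow as $b$ shrinks relative to $a$.

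\paragraph{The paper's proof.} It has the same defect, so it supplies no missing idea. It passes from a norm estimate of size $4\delta$ directly to $[(e_k^{1/2}ae_k^{1/2}\otimes I_m-\varepsilon/2)_+]\le[(e_k^{1/2}be_k^{1/2}-\varepsilon/4)_+\otimes I_n]$ without accounting for $\|v\|$. The estimate ``$<2\delta+\frac14\varepsilon$'' in Lemma 3.1 drops the factor $\|v\|^2$ in the same way.

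\paragraph{What your argument does prove.} Choose the cut on the $b$-side after $v$, namely any $\eta>0$ with $\|v\|^2\eta<\varepsilon/4$, where $4\delta<\varepsilon$. Then the total norm error is $<2\delta+\varepsilon/4<\varepsilon$. Hence for all sufficiently large $k$,
\[
m[(e_k^{1/2}ae_k^{1/2}-\varepsilon)_+]\le n[(e_k^{1/2}be_k^{1/2}-\eta)_+]\le n[e_k^{1/2}be_k^{1/2}]
\]
in ${\rm Cu}(I)$, with $\eta$ depending on $a,b,m,n,\varepsilon$. Any later use of the lemma that relies on the specific cut $\varepsilon/4$ has to be rechecked against this weaker form.
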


\begin{proof} For any $\varepsilon>0$ and  $m[a]\leq n[b]$,
we may assume that $m\geq n$, there exists  $v=(v_{ij})\in {\rm M}_{m}(A)$ and a sufficiently small $\delta>0$ with $4\delta<\varepsilon$ satisfying
$$\|v(b\otimes I_n)v^*-a\otimes I_m\|<\delta.$$

Since $\{e_k\}_{k\in \mathbb{N}}$ be a quasi-central   approximate identity of $A$.
We have  $$\lim\limits_{k\to\infty}\|e_kx-xe_k\|=0$$ for all $x\in A$.

 We can choose  sufficiently large $k$ and sufficiently small $\delta'>0$ with $4\sum_{i,j}\|v_{i,j}\|\delta'$ $<\delta$, and $2\delta'\leq \delta$.
Moreover, choose another small constant $\delta''>0$ such that  $\|e_kx-xe_k\|<\delta''$ implies
$$\|e_k^{\tfrac12}x-xe_k^{\tfrac12}\|<\delta'.$$

Since $\|v(b\otimes I_n)v^*-a\otimes I_m\|<\delta$, one has
$$\|e_k^{\frac12}\otimes I_m(v(b\otimes I_n)v^*-a\otimes I_m)e_k^{\frac12}\otimes I_m\|<\delta,$$
Since $\|e_k^{\tfrac12}x-xe_k^{\tfrac12}\|<\delta',$
this  yields
$$
\|v\big(e_k^{\frac12}be_k^{\frac12}\otimes I_n)v^* - e_k^{\frac12}ae_k^{\frac12}\otimes I_m\| < \delta+2\delta'<2\delta.
$$

Since $\{e_k\}$ is an approximate unit of $I$, we have
\[
\lim_{k\to\infty}\Big\|(e_k\otimes I_m)\big(e_k^{\frac12}be_k^{\frac12}\otimes I_n\big)-e_k^{\frac12}be_k^{\frac12}\otimes I_n\Big\|=0.
\]
When $k$ is large enough,
\[
\Big\|v(e_k\otimes I_m)\big(e_k^{\frac12}b(e_k^{\frac12}\otimes I_m)\big)(e_k\otimes I_n)v^* - e_k^{\frac12}ae_k^{\frac12}\otimes I_m\Big\|<4\delta.
\]

Thus inside $I$ we obtain
\[
[(e_k^{\frac12}ae_k^{\frac12}\otimes I_m-1/2\varepsilon)_+]\leq[(e_k^{\frac12}be_k^{\frac12}-\frac\varepsilon4)_+\otimes I_n],
\]
Therefore, we have
\[
m[(e_k^{\frac12}ae_k^{\frac12}-\varepsilon)_+]
\leq n[(e_k^{\frac12}be_k^{\frac12}-\frac{\varepsilon}{4})_+].
\]
\end{proof}

\begin{theorem}\label{thm:3.3}
		
	Let $0\to I\xrightarrow{i}A\xrightarrow{\pi}A/I$ be a short exact sequence of ${\rm C^*}$-algebras.
 Suppose that $\dim({\rm {Cu}}(I))\le n$ and $\dim({\rm {Cu}}(A/I))\le m$. Then
	\[
	\dim{\rm({Cu}}(A))\le \dim{\rm({Cu}}(I))+\dim{\rm({Cu}}(A/I))+1\leq m+n+1.
	\]
	
\end{theorem}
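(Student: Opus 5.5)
We follow the pattern set up by Lemma \ref{lem:3.1} and Lemma \ref{lem:3.2}: split each element of $A$ with a quasi-central approximate identity $\{e_k\}$ of $I$ into an ``ideal part'' $e_k^{1/2}(\cdot)e_k^{1/2}$ and a ``quotient part'' $(1-e_k)^{1/2}(\cdot)(1-e_k)^{1/2}$. We then use the dimension hypotheses on each part separately. The $n+1$ colours coming from $I$ and the $m+1$ colours coming from $A/I$ give $n+m+2$ colour classes, which is exactly $\dim\le n+m+1$.

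\emph{Setup.} By Lemma \ref{lem:2.4} (with Lemma \ref{lem:2.3} to pass to ${\rm M}_\infty(A)_+$, and tensoring $e_k$ with identity matrices), it suffices to take $x,y_1,\dots,y_r\in{\rm M}_\infty(A)_+$ with $[x]\ll[y_1]+\cdots+[y_r]$ and $\delta>0$. We must produce $z_{j,k}$, $k=0,\dots,n+m+1$, satisfying the three non-strict inequalities.

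Since $[x]\ll\sum_j[y_j]$, we may fix $\eta>0$ with $[x]\le\sum_j[(y_j-\eta)_+]$.

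\emph{Quotient part.} Apply $\pi$ to get $[\pi(x)]\le\sum_j[\pi((y_j-\eta)_+)]$ in $A/I$. Using $\dim({\rm Cu}(A/I))\le m$ via Lemma \ref{lem:2.4}, we obtain $w_{j,l}\in {\rm M}_\infty(A/I)_+$, $l=0,\dots,m$. We lift them to positive elements of ${\rm M}_\infty(A)$ and compress them by $(1-e_k)^{1/2}$. We then use Lemma \ref{lem:3.1} (extended from the one-term version to finite sums by orthogonal direct sums) to transport each of the three inequalities back to $A$ at the cost of $\varepsilon$-cutdowns, for $k$ large.

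The resulting elements $z_{j,l}:=((1-e_k)^{1/2}\tilde w_{j,l}(1-e_k)^{1/2}-\varepsilon)_+$ then satisfy:
\begin{itemize}
\item $[z_{j,l}]\le[y_j]$;
\item $\sum_j[z_{j,l}]\le[x]$ for each $l$;
\item $[((1-e_k)^{1/2}x(1-e_k)^{1/2}-\varepsilon')_+]\le\sum_{j,l}[z_{j,l}]$.
\end{itemize}

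\emph{Ideal part.} Similarly, apply Lemma \ref{lem:3.2} to $[x]\le\sum_j[(y_j-\eta)_+]$ to get
\[
[(e_k^{1/2}xe_k^{1/2}-\varepsilon)_+]\le\sum_j[(e_k^{1/2}y_je_k^{1/2}-\varepsilon/4)_+]
\]
in ${\rm Cu}(I)$. Apply $\dim({\rm Cu}(I))\le n$ (Lemma \ref{lem:2.4}) to this inequality to get $u_{j,k'}$, $k'=0,\dots,n$. These satisfy $[u_{j,k'}]\le[e_k^{1/2}y_je_k^{1/2}]\le[y_j]$ and $\sum_j[u_{j,k'}]\le[(e_k^{1/2}xe_k^{1/2}-\varepsilon)_+]\le[x]$, and their total sum dominates a cutdown of $e_k^{1/2}xe_k^{1/2}$. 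The comparisons in ${\rm Cu}(I)$ persist in ${\rm Cu}(A)$.

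\emph{Gluing.} The colour families are $\{u_{j,k'}\}_{k'}$ and $\{z_{j,l}\}_l$, giving $(n+1)+(m+1)$ colours. Conditions (1) and (3) then hold colourwise. For (2), we need
\[
[(x-\delta)_+]\le[(e_k^{1/2}xe_k^{1/2}-\varepsilon)_+]+[((1-e_k)^{1/2}x(1-e_k)^{1/2}-\varepsilon)_+].
\]
This follows because $x^{1/2}e_kx^{1/2}+x^{1/2}(1-e_k)x^{1/2}=x$. Moreover $x^{1/2}e_kx^{1/2}\sim e_k^{1/2}xe_k^{1/2}$, and likewise for $1-e_k$. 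Finally, for a sum $c+d$ one has $[(c+d-2\varepsilon)_+]\le[(c-\varepsilon)_+]+[(d-\varepsilon)_+]$, a standard fact. Choosing $\varepsilon<\delta/2$ gives (2).

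\emph{Main obstacle.} I expect the main obstacle to be the quotient step: ensuring that liftings of the $w_{j,l}$ compressed by $(1-e_k)^{1/2}$ are genuinely below $y_j$ and $x$ in ${\rm Cu}(A)$, and not merely modulo $I$. Lemma \ref{lem:3.1} only yields comparison between compressed elements. So I would first reduce to the case where each $w_{j,l}$ is, up to an $\varepsilon$-cutdown, of the form $\pi(v y_j v^*)$. We can arrange this because $[w]\le[\pi(y_j)]$ lets us write $(w-\varepsilon)_+$ as being close to $\pi(s)\pi(y_j)\pi(s)^*$. We then lift to $sy_js^*\precsim y_j$ and apply Lemma \ref{lem:3.1} to the relation with $x$.
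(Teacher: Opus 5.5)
Your proposal is essentially the paper's own proof of Theorem~\ref{thm:3.3}. You cut $x$ with a quasi-central approximate unit, lift and compress the covering from $\mathrm{Cu}(A/I)$ via Lemma~\ref{lem:3.1}, and apply $\dim(\mathrm{Cu}(I))\le n$ after Lemma~\ref{lem:3.2}. You then glue the $(n+1)+(m+1)$ colour classes via $(x-2\varepsilon)_+\precsim(e_k^{1/2}xe_k^{1/2}-\varepsilon)_+\oplus((1-e_k)^{1/2}x(1-e_k)^{1/2}-\varepsilon)_+$; you justify this from $x=x^{1/2}e_kx^{1/2}+x^{1/2}(1-e_k)x^{1/2}$, whereas the paper only asserts it.

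The obstacle you flag is not a real one: Lemma~\ref{lem:3.1} already gives $z_{j,l}\precsim(1-e_k)^{1/2}y_j(1-e_k)^{1/2}\precsim y_j$, which is exactly how the paper gets condition (1). The step that does need care, in your sketch and in the paper alike, is the covering relation. The cut-down defining $z_{j,l}$ must be chosen only after fixing the element of $A/I$ that implements the covering relation there, and it must be small relative to that element's norm. A right-hand cut-down such as the fixed $\varepsilon/4$ in Lemma~\ref{lem:3.1} is not available in general: it fails when $b$ is a tiny scalar multiple of $a$.
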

	
\begin{proof}By Lemma  \ref{lem:2.4},  we need to show  that for any
 $x, y_1, y_2, \dots, y_r\in M_\infty(A)_+$ satisfying $[x]\ll [y_1]+[y_2]+\dots+[y_r]$ and any $\varepsilon>0$, there exist elements $z_{jl}\in (A\otimes \mathcal{K})_+$ ($j=1,\dots,r,\ l=0,1,\dots,n+m+1$) such that the following  condition hold:
	
	$(1)z_{jl}\precsim y_j,$ for all $j=1,\dots,r$ and $l=0, 1, \dots, n+m+1$,
	
	$(2)[(x-\varepsilon)_+]\leq \sum_{j,k} [z_{jl}],$
	
	$(3)\sum_{j=1}^r [z_{jl}]\leq [x]$ for all $l=0, 1, \dots, n+m+1$.
	
	We may assume that all  $x, y_1, y_2, \dots, y_r $ are all in $A_+$ and  $ y_1, y_2, \dots, y_r$ are all orthogonal.

	Since $x\ll y_1+y_2+\dots+y_r$, we have
	\[
	[\pi(x)]\ll [\pi(y_1)]+[\pi(y_2)]+\dots+[\pi(y_r)].
	\]

	In the quotient ${\rm C^*}$-algebra $A/I$, since  $\mathrm{Cu}(A/I)\leq m$, there exist $ z_{jl}'\in (A/I\otimes \mathcal{K})_+$ satisfying
	
	$(1)' $  $\pi(z_{jl}')\precsim \pi(y_j),$ $j=1,\dots,r$ and $l=0,1,\dots,m$,
	
	$(2)'$  $[\pi((x-\varepsilon)_+)]\leq \sum_{j,l}[\pi(z_{jl}')],$
	
	$(3)'$  $ \sum_{j=1}^r[\pi (z_{jl}')]\leq [\pi(x)]$, $l=0,1,\dots,m$,
	
	Let $\{e_k\}\subset I$ be a quasi-central   approximate identity of $A$, i.e., $$\lim\limits_{k\to\infty}\|e_k a-ae_k\|=0$$ for all $a\in A$.

 By Lemma \ref{lem:3.1}, there exist sufficiently large integer $k$, the following Cuntz subequivalence relations hold:

	$(1'')$ $((1-e_k)^{\frac12}z'_{jl}(1-e_k)^{\frac12}-\varepsilon)_+\precsim
 ((1-e_k)^{\frac12}y_j(1-e_k)^{\frac12}-\tfrac\varepsilon4\big)_+$ for $j=1,2,\cdots, r$ and $l=0,1,\dots,m$,

$(2'')$ $[((1-e_k)^{\frac12}(x-4\varepsilon)_+(1-e_k)^{\frac12}-\varepsilon)_+]\leq  \sum_{j,l} [((1-e_k)^{\frac12}z'_{jl}(1-e_k)^{\frac12}-\varepsilon)_+],$
for $l=0, 1, \dots, m$,

$(3'')$ $\sum_{j=1}^r[((1-e_k)^{\frac12}z'_{jk}(1-e_k)^{\frac12}-\varepsilon)_+]\leq
	[((1-e_k)^{\frac12}x(1-e_k)^{\frac12}-\tfrac\varepsilon4)_+].$

Since $[x]\ll [y_1]+[y_2]+\dots+[y_r]$, there exist sufficiently small $\delta$ such that
$$[x]\leq  [(y_1-\delta)_+]+[(y_2-\delta)_+]+\dots+[(y_r-\delta)_+].$$

By Lemma \ref{lem:3.2},  there exist sufficiently large $k$, such that
	
		\[
	\big(e_k^{\frac12}xe_k^{\frac12}-\varepsilon)_+
	\precsim (e_k^{\frac12}y_1e_k^{\frac12}-\tfrac\varepsilon4)_+\oplus (e_k^{\frac12}y_2e_k^{\frac12}-\tfrac\varepsilon4)_+\oplus\cdots\oplus \big(e_k^{\frac12}y_re_k^{\frac12}-\tfrac\varepsilon4\big)_+.
	\]
	
	Since $\dim\mathrm{Cu}(I)\le n$, there exist  $z_{jl}''\in (I\otimes \mathcal{K})_+$,  $j=1,\dots,r$ and $l=0,1,\dots,n$, such that
	
	$(1''')z_{jl}''\precsim (e_k^{\frac12}y_j e_k^{\frac12}-\frac{1}{4r}\varepsilon)_+,$ for all $j=1,\dots,r$ and $l=0,1,\dots,n$,
	
	$(2''')[(e_k^{\frac12}xe_k^{\frac12}-2\varepsilon)_+]\leq \sum_{j,l}[z_{jk}''],$
	
	$(3''')\sum_{j=1}^k[z_{jl}'']\leq[(e_k^{\frac12}xe_k^{\frac12}-\tfrac\varepsilon4)_+]$ for all
	$l=0,1,\dots,n$.
	
	We take $z_{jl}''$,  for $j=1,\dots,r$ and $l=0,1,\dots,n$ and $((1-e_k)^{\frac12}z'_{jl}(1-e_k)^{\frac12}-4\varepsilon)_+$  for $j=1,\dots,r$ and $l=0,1,\dots,m$.

 We have the following:
	
	$(1)$  $z_{jl}''\precsim (e_k^{\frac12}y_j e_k^{\frac12}-1/4r\varepsilon)_+\precsim y_j$, and $ ((1-e_k)^{\frac12}z'_{jl}(1-e_k)^{\frac12}-4\varepsilon)_+ \precsim ((1-e_k)^{\frac12}y_j(1-e_k)^{\frac12}-\varepsilon)_+\precsim y_j,$
	
$(2)$  $[(x-8\varepsilon)_+]\leq [(e_k^{\frac12}xe_k^{\frac12}-2\varepsilon)_+]
	+[((1-e_k)^{\frac12}x(1-e_k)^{\frac12}-4\varepsilon)_+]\leq \sum_{j,k}[z_{jk}'']+
\sum_{j,k}[((1-e_k)^{\frac12}z'_{jk}(1-e_k)^{\frac12}-\varepsilon)_+]$,

	$(3)$ $\sum_{j=1}^l[z_{jk}'']\leq [(e_k^{\frac12}xe_k^{\frac12}-\varepsilon)_+\leq[x]$, and 	$\sum_{j,k}[((1-e_k)^{\frac12}z'_{jk}(1-e_k)^{\frac12}-4\varepsilon)_+]\leq [x].$

The proof is complete.
	
\end{proof}

The following Theorem was obtained by Perera,  Thiel, and Vilalta  in \cite{PTV}, we give a new proof of  this result.

\begin{theorem}\label{thm:3.4}

Let $0\to I\xrightarrow{i}A\xrightarrow{\pi}A/I\to0$ be a short exact sequence of ${\rm C^*}$-algebras. If the ideal $I$ has the $m$-comparison property and the quotient algebra $A/I$ has  the $n$-comparison property, then $A$ satisfies  the $(m+n+1)$-comparison property.

\end{theorem}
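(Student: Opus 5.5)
By Theorem \ref{thm:2.9} it suffices to verify $(m+n+1)$-comparison in ${\rm W}(A)$. So I fix $x,y_0,\dots,y_{m+n+1}\in {\rm M}_\infty(A)_+$ with $(k_j+1)[x]\le k_j[y_j]$ for all $j$. After passing to matrices I may assume all elements lie in $A_+$. I will show $[(x-\varepsilon)_+]\le\sum_j[y_j]$ for every $\varepsilon>0$; this gives $[x]\le \sum_j [y_j]$. The strategy mirrors the proof of Theorem \ref{thm:3.3}. Choose a quasi-central approximate identity $\{e_k\}\subset I$ of $A$, and split $x$ into an ideal part $e_k^{1/2}xe_k^{1/2}$ and a quotient part $(1-e_k)^{1/2}x(1-e_k)^{1/2}$. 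Then compare the first part using $m+1$ of the $y_j$ in $I$, and the second part using the remaining $n+1$ of the $y_j$ modulo $I$.

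For the ideal part, I apply Lemma \ref{lem:3.2} to each relation $(k_j+1)[x]\le k_j[y_j]$ for $j=0,\dots,m$. For large $k$ this gives
\[
(k_j+1)[(e_k^{1/2}xe_k^{1/2}-\varepsilon)_+]\le k_j[(e_k^{1/2}y_je_k^{1/2}-\varepsilon/4)_+]
\]
in ${\rm Cu}(I)$. Hence $(e_k^{1/2}xe_k^{1/2}-\varepsilon)_+<_s (e_k^{1/2}y_je_k^{1/2}-\varepsilon/4)_+$ for $j=0,\dots,m$. Since $I$ has $m$-comparison,
\[
[(e_k^{1/2}xe_k^{1/2}-\varepsilon)_+]\le \sum_{j=0}^m[e_k^{1/2}y_je_k^{1/2}]\le\sum_{j=0}^m[y_j].
\]
For the quotient part, I push $(k_j+1)[\pi(x)]\le k_j[\pi(y_j)]$ for $j=m+1,\dots,m+n+1$ into $A/I$. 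There $n$-comparison gives $[\pi(x)]\le\sum_{j=m+1}^{m+n+1}[\pi(y_j)]$. Lemma \ref{lem:3.1}, with the one-sided multiplicities $1$ and the orthogonal sum of the $y_j$ in place of $b$, then lifts this, for the same large $k$ (enlarging $k$ finitely many times), to
\[
[((1-e_k)^{1/2}x(1-e_k)^{1/2}-\varepsilon)_+]\le\sum_{j=m+1}^{m+n+1}[y_j].
\]

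Finally I combine the two halves. Since $x=e_k^{1/2}xe_k^{1/2}+(1-e_k)^{1/2}x(1-e_k)^{1/2}+r_k$ with $\|r_k\|\to0$ by quasi-centrality, for $k$ large
\[
[(x-3\varepsilon)_+]\le[(e_k^{1/2}xe_k^{1/2}-\varepsilon)_+]+[((1-e_k)^{1/2}x(1-e_k)^{1/2}-\varepsilon)_+].
\]
This uses the standard facts $[(a+b-\varepsilon_1-\varepsilon_2)_+]\le[(a-\varepsilon_1)_+]+[(b-\varepsilon_2)_+]$ for positive $a,b$ and the perturbation lemma for $\|x-x'\|<\varepsilon$. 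Adding the two estimates gives $[(x-3\varepsilon)_+]\le\sum_{j=0}^{m+n+1}[y_j]$, and letting $\varepsilon\to0$ finishes the proof.

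The main obstacle is the quotient step. Lemma \ref{lem:3.1} is stated for a relation between two elements in $A/I$, and I need it for the output of comparison in $A/I$, a relation against a sum. I would handle this by taking the $y_j$ mutually orthogonal, or replacing them by $y_{m+1}\oplus\cdots\oplus y_{m+n+1}$ in a matrix algebra, so that the sum becomes a single element $b$. I then need that $(1-e_k)^{1/2}b(1-e_k)^{1/2}\precsim b$ is compatible with the direct sum, and that the $\varepsilon$-cutdowns produced by the Lemma can all be controlled with a single $k$. A second point requiring care is that $k$ must be chosen uniformly for all finitely many applications of Lemmas \ref{lem:3.1} and \ref{lem:3.2} and for the splitting estimate; this is possible because each requirement holds for all sufficiently large $k$.
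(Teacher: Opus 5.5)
Your proposal is correct and follows the paper's proof essentially step for step: the reduction to ${\rm W}(A)$, the splitting of $x$ by a quasi-central approximate unit, Lemma~\ref{lem:3.2} plus $m$-comparison in $I$ on $m+1$ of the $y_j$, Lemma~\ref{lem:3.1} plus $n$-comparison in $A/I$ on the other $n+1$, and the truncation decomposition of $(x-3\varepsilon)_+$ (keeping separate $k_j$ instead of the paper's common $l$ is harmless). One caution: the $-\tfrac{\varepsilon}{4}$ cut-down on the right of the inequality you quote from Lemma~\ref{lem:3.2} fails in general, since replacing $y_j$ by $\eta y_j$ with $\eta\|y_j\|<\varepsilon/4$ leaves $[y_j]$ unchanged but makes that right-hand side zero; your chain never uses the cut-down, and the version with $[e_k^{1/2}y_je_k^{1/2}]$, resp.\ $[y_j]$, on the right follows from the same quasi-centrality argument and suffices.
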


\begin{proof}By  Theorem \ref{thm:2.9},  we only need to show that $W(A)$ has $(m+n+1)$-comparison property.
For any  positive elements $x, y_0, y_1, \dots, y_m$, $z_0$, $z_1, \dots, z_n$ $\in {\rm M}_\infty(A)_+$ there exist $k_i$ and $l_j$ such that $(k_i+1)[x]\leq k_i[y_i]$ and
$(l_j+1)[x]\leq l_i[y_i]$,  for any $\varepsilon>0$ we will show that $[(x-\varepsilon)_+]\leq \sum_{i=0}^n[y_i]+\sum_{j=0}^m[z_j]$.

We may take a sufficiently large integer $l$ such that $$(l+1)[x]\leq l[z_j], ~~~~
(l+1)[x]\leq l[y_i],$$ for $j=0,1,\cdots, n$ an $i=0,1,\cdots, m$.

 Since  $(l+1)[x]\leq  l[{z}_j]$, one has  $$(l+1)[\pi(x)]\leq l[\pi({z}_j)],$$  for all $j=0,1,\cdots, n$.
  Since $A/I$ has $n$-comparison property, we have $$[\pi(x)]\leq \sum_{j=1}^n[\pi(z_j)].$$

   Let $\{e_k\}\subset I$ be a quasi-central  approximate identity of $A$.
By Lemma \ref{lem:3.1},
  there exists a sufficiently large index $k$, such that
$$
[((1-e_k)^{\frac12}x(1-e_k)^{\frac12}-\varepsilon)_+]
\leq \sum_{j=0}^n [((1-e_k)^{\frac12}z_j(1-e_k)^{\frac12}-\tfrac\varepsilon4)_+].
$$

Next, since  $(l+1)[x]\leq l[y_i]$, for all $j=0,1,\cdots, m$, by Lemma \ref{lem:3.2}, there exists sufficiently large $k$ such that
\[
(l+1)[(e_k^{\frac12}xe_k^{\frac12}-\varepsilon)_+]
\leq l[(e_k^{\frac12}y_ie_k^{\frac12}-\tfrac\varepsilon4)_+].
\]
Combined with the $m$-comparison property of the ideal $I$, we obtain
\[
[(e_k^{\frac12}xe_k^{\frac12}-\varepsilon)_+]
\leq \sum_{i=0}^m [(e_k^{\frac12}y_ie_k^{\frac12}-\tfrac\varepsilon4)_+].
\]

We use the standard decomposition inequality for positive truncations:
\[
(x-3\varepsilon)_+
\precsim  (e_k^{\frac12}xe_k^{\frac12}-\varepsilon)_+
+((1-e_k)^{\frac12}x(1-e_k)^{\frac12}-\varepsilon)_+.
\]
Substitute the two comparison inequalities derived above:
$$[(x-3\varepsilon)_+]\leq \sum_{i=0}^m [(e_k^{\frac12}y_ie_k^{\frac12}-\varepsilon)_+]
+\sum_{j=0}^n [((1-e_k)^{\frac12}z_j(1-e_k)^{\frac12}-\varepsilon)_+]$$
$$\leq \sum_{i=0}^m [y_i]+\sum_{j=0}^n [z_j].$$

By arbitrariness of $\varepsilon>0$, we have $[x]\leq \sum_{i=0}^m [y_i]+\sum_{j=0}^n [z_j]$, which means $A$ possesses the $(m+n+1)$-comparison property.

\end{proof}

\begin{theorem}\label{thm:3.5}
	
Let $0\to I\xrightarrow{i}A\xrightarrow{\pi}A/I\to 0$ be a short exact sequence of ${\rm C^*}$-algebras. Suppose the ideal $I$ has weak $m$-comparison property and the quotient algebra $A/I$ has  weak $n$-comparison property. Then $A$ satisfies the weak $(m+n)$-comparison property.

\end{theorem}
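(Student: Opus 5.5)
The plan is to imitate the proof of Theorem \ref{thm:3.4}, using the quasi-central approximate identity to split $x$ into an ideal part and a quotient part. By Theorem \ref{thm:2.20} it suffices to verify weak $(m+n)$-comparison in ${\rm W}(A)$. So fix $x,y\in {\rm M}_\infty(A)_+$ with $x<_s y$, that is, $(l+1)[x]\leq l[y]$ for some $l\in\mathbb{N}$. Fix $\varepsilon>0$. The goal is $[(x-3\varepsilon)_+]\leq (m+n)[y]$, and since $\varepsilon$ is arbitrary this gives $[x]\leq (m+n)[y]$.

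\emph{Quotient part.} Applying $\pi$ gives $(l+1)[\pi(x)]\leq l[\pi(y)]$, so weak $n$-comparison of $A/I$ yields $[\pi(x)]\leq n[\pi(y)]$. Let $\{e_k\}\subset I$ be a quasi-central approximate identity of $A$. Lemma \ref{lem:3.1}, applied with the multiplicities $1$ and $n$, then gives, for all large $k$,
\[
[((1-e_k)^{\frac12}x(1-e_k)^{\frac12}-\varepsilon)_+]\leq n[((1-e_k)^{\frac12}y(1-e_k)^{\frac12}-\tfrac\varepsilon4)_+].
\]

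\emph{Ideal part.} Lemma \ref{lem:3.2}, applied with the multiplicities $l+1$ and $l$, gives for all large $k$
\[
(l+1)[(e_k^{\frac12}xe_k^{\frac12}-\varepsilon)_+]\leq l[(e_k^{\frac12}ye_k^{\frac12}-\tfrac\varepsilon4)_+]
\]
in ${\rm W}(I)$. Weak $m$-comparison of $I$ then gives $[(e_k^{\frac12}xe_k^{\frac12}-\varepsilon)_+]\leq m[(e_k^{\frac12}ye_k^{\frac12}-\tfrac\varepsilon4)_+]$. Choose a single $k$ large enough for both parts.

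\emph{Gluing.} As in the proof of Theorem \ref{thm:3.4}, use the decomposition
\[
(x-3\varepsilon)_+\precsim (e_k^{\frac12}xe_k^{\frac12}-\varepsilon)_+ + ((1-e_k)^{\frac12}x(1-e_k)^{\frac12}-\varepsilon)_+ .
\]
This holds because $x=e_k^{\frac12}xe_k^{\frac12}+(1-e_k)^{\frac12}x(1-e_k)^{\frac12}$ up to a small commutator error for large $k$, and the sum of two positive elements is dominated by their direct sum. Also $e_k^{\frac12}ye_k^{\frac12}\precsim y$ and $(1-e_k)^{\frac12}y(1-e_k)^{\frac12}\precsim y$. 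Combining these gives $[(x-3\varepsilon)_+]\leq m[y]+n[y]=(m+n)[y]$.

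The main obstacle is that the ideal and quotient estimates must hold for one common $k$, with uniform control of the $\varepsilon$-cutdowns. The comparisons in $I$ must also be legitimately transported back to $A$; this works because Cuntz subequivalence in $I$ implies it in $A$. This is handled exactly as in the proof of Theorem \ref{thm:3.4}. Note that no extra $+1$ appears, unlike in Theorem \ref{thm:3.4}: both pieces are dominated by multiples of the same $y$, and the multiplicities simply add to $m+n$.
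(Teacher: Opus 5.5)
Your proposal is correct and is essentially the paper's own proof. Both arguments reduce to ${\rm W}(A)$ via Theorem \ref{thm:2.20}, use Lemma \ref{lem:3.1} with weak $n$-comparison of $A/I$ for the $(1-e_k)$-piece, use Lemma \ref{lem:3.2} with weak $m$-comparison of $I$ for the $e_k$-piece, and glue with the same splitting $(x-3\varepsilon)_+\precsim (e_k^{\frac12}xe_k^{\frac12}-\varepsilon)_+ + ((1-e_k)^{\frac12}x(1-e_k)^{\frac12}-\varepsilon)_+$, bounding each compressed copy of $y$ by $y$.

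One remark in your favour: you only use that the right-hand sides in Lemmas \ref{lem:3.1} and \ref{lem:3.2} are dominated by $[y]$. So your argument does not depend on the specific $\varepsilon/4$ cut-down asserted there. That cut-down cannot hold as literally stated; for example, take $A=I=\mathbb{C}$, $e_k=1$, $a=1$, $b=\varepsilon/8$. The versions with $e_k^{\frac12}ye_k^{\frac12}$ and $(1-e_k)^{\frac12}y(1-e_k)^{\frac12}$ left uncut on the right follow directly from quasi-centrality and the standard perturbation lemma, and they suffice for your proof.
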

	
\begin{proof}
By Theorem \ref{thm:2.20}, for any $\varepsilon>0$ and any integer $l$  and any  $x, y\in {\rm M}_\infty(A)_+$ satisfying $(l+1)[x]\leq l[y]$.
 we only need to show $$[(x-\varepsilon)_+]\leq(m+n)[y].$$

The quotient map $\pi$ preserves Cuntz comparison, which implies
\[
(l+1)[\pi(x)]\leq l[\pi(y)].
\]
By the weak $n$-comparison property of $A/I$, we have $$[\pi(x)]\leq  n[\pi(y)].$$

Let $\{e_k\}\subset I$ be a quasi-central   approximate identity of $A$. Applying Lemma \ref{lem:3.1},  there exists a sufficiently large integer $k$ such that,
\[
[((1-e_k)^{\tfrac12}x(1-e_k)^{\tfrac12}-\varepsilon)_+]\leq  n[((1-e_k)^{\tfrac12}y(1-e_k)^{\tfrac12}-\tfrac\varepsilon4)_+].
\]

Since $(l+1)[x]\leq l[y]$, apply  Lemma \ref{lem:3.2}, there exist  sufficiently large $k$ satisfying
\[
(l+1)[(e_k^{\tfrac12}xe_k^{\tfrac12}-\varepsilon)_+]
\leq  l[(e_k^{\tfrac12}ye_k^{\tfrac12}-\tfrac\varepsilon4)_+].
\]
Since $I$ has the weak $m$-comparison property, we obtain
\[
[(e_k^{\tfrac12}xe_k^{\tfrac12}-\varepsilon)_+]\leq  m[(e_k^{\tfrac12}ye_k^{\tfrac12}-\tfrac\varepsilon4)_+].
\]

Use the truncation decomposition inequality for positive elements:
\[
(x-3\varepsilon)_+
\precsim  (e_k^{\tfrac12}xe_k^{\tfrac12}-\varepsilon)_+
+((1-e_k)^{\tfrac12}x(1-e_k)^{\tfrac12}-\varepsilon)_+.
\]
Substitute the two comparison inequalities:

$$[(x-3\varepsilon)_+]\leq
m[(e_k^{\tfrac12}ye_k^{\tfrac12}-\tfrac\varepsilon4)_+]+n[((1-e_k)^{\tfrac12}y(1-e_k)^{\tfrac12}-\tfrac\varepsilon4)_+]$$
$$\leq m[y]+n[y]=(m+n)[y].$$

From arbitrariness of $\varepsilon>0$, $[x]\leq (m+n)[y]$, hence $A$ has the weak $(m+n)$-comparison property.

\end{proof}

\begin{theorem}\label{thm:3.6}

Let $0\to I\xrightarrow{i}A\xrightarrow{\pi}A/I\to 0$ be a short exact sequence of  ${\rm C^*}$-algebras. If the ideal $I$ and the quotient algebra $A/I$ have the $m$-divisible  property, then $A$ possesses the weak  ($2$-$m$)-divisible  property.

\end{theorem}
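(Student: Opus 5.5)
We read the statement as: if $I$ and $A/I$ are $m$-almost divisible (Definition \ref{def:2.7}), then $A$ is weakly ($2$-$m$)-almost divisible (Definition \ref{def:2.8}). The plan follows the pattern of Theorems \ref{thm:3.4} and \ref{thm:3.5}. We split an element of $A$ into an "ideal part" $e_k^{1/2}ae_k^{1/2}$ and a "quotient part" $(1-e_k)^{1/2}a(1-e_k)^{1/2}$, divide each part separately, and take the direct sum of the two divisors. Since each part is Cuntz below $a$, the sum of the divisors is bounded by $2[a]$ rather than $[a]$; this is exactly where the factor $2$ in weak ($2$-$m$)-divisibility comes from.

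By Theorem \ref{thm:2.11}, it suffices to take $a\in {\rm M}_\infty(A)_+$ (say $a\in A_+$ after amplifying), $k\in\mathbb{N}$ and $\varepsilon>0$, and to produce $b\in {\rm M}_\infty(A)_+$ with $k[b]\le 2[a]$ and $[(a-\varepsilon)_+]\le (k+1)(m+1)[b]$ in ${\rm W}(A)$. Fix a quasi-central approximate identity $\{e_k\}\subset I$, re-indexed by $\lambda$ to avoid a clash with $k$. Choose $\lambda$ large so that the decomposition inequality used in Theorem \ref{thm:3.4} holds:
\[
(a-3\varepsilon)_+\precsim (e_\lambda^{1/2}ae_\lambda^{1/2}-\varepsilon)_+\oplus((1-e_\lambda)^{1/2}a(1-e_\lambda)^{1/2}-\varepsilon)_+ .
\]

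\emph{Ideal part.} Put $a_1=e_\lambda^{1/2}ae_\lambda^{1/2}\in I_+$. Apply $m$-almost divisibility of $I$ (via Theorem \ref{thm:2.10}) to $a_1$ with tolerance $\varepsilon$. This gives $b_1\in {\rm M}_\infty(I)_+$ with $k[b_1]\le [a_1]\le[a]$ and $[(a_1-\varepsilon)_+]\le (k+1)(m+1)[b_1]$.

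\emph{Quotient part.} Put $a_2=(1-e_\lambda)^{1/2}a(1-e_\lambda)^{1/2}$. Apply $m$-almost divisibility of $A/I$ to $\pi(a_2)=\pi(a)$. This gives $c\in{\rm M}_\infty(A/I)_+$ with $k[c]\le[\pi(a)]$ and $[(\pi(a)-\varepsilon)_+]\le (k+1)(m+1)[c]$. Lift $c$ to a positive $\tilde c$ in ${\rm M}_\infty(A)$. The two quotient inequalities then have to be pulled back into $A$, which is exactly what Lemma \ref{lem:3.1} does: after enlarging $\lambda$ (the lemma allows this, and the ideal step can be redone at the enlarged index), set
\[
b_2=((1-e_\lambda)^{1/2}\tilde c(1-e_\lambda)^{1/2}-\varepsilon')_+ .
\]
For a suitable small $\varepsilon'$ and $\lambda$ large, we want
\[
k[b_2]\le [((1-e_\lambda)^{1/2}a(1-e_\lambda)^{1/2}-\varepsilon'/4)_+]\le[a]
\quad\text{and}\quad
[(a_2-\varepsilon)_+]\le (k+1)(m+1)[b_2]\ \text{(after rescaling the tolerances)}.
\]

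\emph{Assembling.} Take $b=b_1\oplus b_2$. Then
\[
k[b]=k[b_1]+k[b_2]\le 2[a],
\]
and
\[
[(a-3\varepsilon)_+]\le[(a_1-\varepsilon)_+]+[(a_2-\varepsilon)_+]\le(k+1)(m+1)([b_1]+[b_2]).
\]
Replacing $\varepsilon$ by $\varepsilon/3$ finishes the proof.

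The main obstacle is the quotient step. Lemma \ref{lem:3.1} is stated for inequalities $m[\pi(a)]\le n[\pi(b)]$ between genuine elements. It has to be applied twice with interlocking tolerances: once to $k[c]\le[\pi(a)]$, and once to $[(\pi(a)-\varepsilon)_+]\le(k+1)(m+1)[c]$. The cut-downs by $\varepsilon'$ and $\varepsilon'/4$ must be arranged so that the \emph{same} $b_2$ serves in both inequalities. I would first fix $\varepsilon'$ small compared with $\varepsilon$, then pick $\lambda$ large enough for both applications of Lemma \ref{lem:3.1} simultaneously, and use $((x-\alpha)_+-\beta)_+=(x-\alpha-\beta)_+$ to absorb the loss of tolerance. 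A further point to check is that $\lambda$ can be chosen common to the ideal and quotient steps. This holds because each step only requires $\lambda$ sufficiently large.
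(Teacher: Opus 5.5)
Your proposal is correct and is essentially the paper's proof: reduce via Theorem~\ref{thm:2.11}, split $a$ with a quasi-central approximate unit into $e_k^{1/2}ae_k^{1/2}$ and $(1-e_k)^{1/2}a(1-e_k)^{1/2}$, divide the first in $I$ and the second in $A/I$ (pulled back by Lemma~\ref{lem:3.1}), and take $b=b_1\oplus b_2$, with each summand costing one copy of $[a]$. Your insistence that a single $b_2$ serve in both pulled-back inequalities is exactly the bookkeeping the paper's write-up glosses over; one adjustment is needed, since the right-hand cut-down in Lemma~\ref{lem:3.1} cannot literally be $\varepsilon/4$ (replacing $b$ by a tiny scalar multiple breaks it), but a level $\rho>0$ does work, roughly $\delta/\|v\|^2$ for the implementing element $v$, which depends on the data but not on the index, so choose $\varepsilon'\le\rho$ before fixing the index and your argument goes through.
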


\begin{proof} By Theorem \ref{thm:2.11}, for any $\varepsilon>0$ any $a\in {\rm M}_\infty(A)_+$ and any integer  $n>0$, we need to show that there exist  $b\in (A\otimes \mathcal{K})_+$  such that
\[
n[b]\leq 2[a],\quad [(a-\varepsilon)_+]\leq(n+1)(m+1)[b].
\]
Let $\{e_k\}\subset I$ be a quasi-central approximate identity of $A$. Choose sufficiently large $k$ to get the  decomposition:
$$\big(a-8\varepsilon\big)_+
\precsim (e_k^{\tfrac12}ae_k^{\tfrac12}-2\varepsilon)_+
+((1-e_k)^{\tfrac12}a(1-e_k)^{\tfrac12}-2\varepsilon)_+.$$

Notice $(e_k^{\tfrac12}ae_k^{\tfrac12}-\varepsilon)_+\in I$. By the $m$-divisible  property of $I$, there exists $b_1\in (I\otimes \mathcal{K})_+$ such that
$$n[b_1]\leq [(e_k^{\tfrac12}ae_k^{\tfrac12}-\varepsilon)_+],$$  and $$[(e_k^{\tfrac12}ae_k^{\tfrac12}-2\varepsilon)_+]\leq (n+1)(m+1)[b_1].$$

Since $\pi(a)\in(A/I)_+$,  and $A/I$ has $m$-divisible  property, there exists ${b}_2\in (A\otimes \mathcal{K})_+$
such that  $$k[\pi({b}_2)]\leq [\pi(a)],$$ and $$[(\pi({b}_2)-\varepsilon)_+]\leq (m+1)(n+1)[b_2].$$
 Applying  Lemma \ref{lem:3.1},  pick sufficiently large $k$  such that
$$k[((1-e_k)^{\tfrac12}b_2(1-e_k)^{\tfrac12}-\varepsilon)_+]\leq [((1-e_k)^{\tfrac12}a(1-e_k)^{\tfrac12}-\tfrac\varepsilon4)_+],$$ and
$$[((1-e_k)^{\tfrac12}a(1-e_k)^{\tfrac12}-2\varepsilon)_+\leq (m+1)(n+1)[(b_2-1/4\varepsilon)_+]$$

Set $b:=b_1\oplus ((1-e_k)^{\tfrac12}b_2(1-e_k)^{\tfrac12}-\varepsilon)_+$. Then
\[
\begin{aligned}
kb &= k[b_1]+ k[b_2]\\\
&\leq [(e_k^{\tfrac12}ae_k^{\tfrac12}-\varepsilon)_+]
+[((1-e_k)^{\tfrac12}a(1-e_k)^{\tfrac12}-\tfrac\varepsilon4)_+]\\\
&\leq 2[a],\\\\[4pt]
[(a-8\varepsilon\big)_+]
&\leq [(e_k^{\tfrac12}ae_k^{\tfrac12}-2\varepsilon)_+]
+[((1-e_k)^{\tfrac12}a(1-e_k)^{\tfrac12}-2\varepsilon)_+]\\\
&\leq (m+1)k[b_1]+(n+1)k[b_2]\\\
&\leq (m+1)(n+1)([b_1]+[b_2])\\\
&=(m+1)(n+1)[b].
\end{aligned}
\]

Therefore, $A$ has the weak  ($2$-$m$)-divisible  property.

\end{proof}

\begin{theorem}\label{thm:3.7}
	
	Let $0\to I\xrightarrow{i}A\xrightarrow{\pi}A/I\to 0$ be a short exact sequence of ${\rm C^*}$-algebras. If the ideal $I$ has the weak $m_1$-divisible property, and the quotient algebra $A/I$ has the weak $m_2$-divisible  property, then $A$ possesses the weak  $(m_1+m_2)$-divisible property.
	
\end{theorem}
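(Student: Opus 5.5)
By Theorem \ref{thm:2.18}, it suffices to work in ${\rm W}(A)$. Fix $a\in {\rm M}_\infty(A)_+$, an integer $N\ge 1$ and $\varepsilon>0$. We look for $b\in {\rm M}_\infty(A)_+$ (or in $(A\otimes\mathcal K)_+$, adjusting via Lemma 1.9 of \cite{P3} as in Lemma \ref{lem:2.3}) with
\[
[(a-c\varepsilon)_+]\leq N[b]\leq (m_1+m_2)[a]
\]
for some fixed constant $c$. Since $\varepsilon$ is arbitrary, this is enough. As in the proofs of Theorems \ref{thm:3.4} and \ref{thm:3.6}, we take a quasi-central approximate identity $\{e_k\}\subset I$ of $A$. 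For $k$ large we use the splitting
\[
(a-3\varepsilon)_+\precsim (e_k^{\frac12}ae_k^{\frac12}-\varepsilon)_+\oplus((1-e_k)^{\frac12}a(1-e_k)^{\frac12}-\varepsilon)_+ .
\]
We then treat the ideal piece and the quotient piece separately and add the results. This is why the constant ends up as the sum $m_1+m_2$.

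\emph{Ideal piece.} Set $a_I=(e_k^{\frac12}ae_k^{\frac12}-\varepsilon/4)_+\in I_+$. Apply the weak $m_1$-divisibility of ${\rm Cu}(I)$ (Theorem \ref{thm:2.17}) to $[(e_k^{\frac12}ae_k^{\frac12}-\varepsilon)_+]\ll[a_I]$. This gives $b_1\in (I\otimes\mathcal K)_+$ with
\[
[(e_k^{\frac12}ae_k^{\frac12}-\varepsilon)_+]\leq N[b_1]\leq m_1[a_I]\leq m_1[a].
\]
The last inequality holds because $e_k^{\frac12}ae_k^{\frac12}\precsim a$. Comparisons in $I$ remain valid in $A$, so this whole chain holds in ${\rm Cu}(A)$.

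\emph{Quotient piece.} Apply the weak $m_2$-divisibility of $A/I$ to $[(\pi(a)-\varepsilon/8)_+]\ll[\pi(a)]$. This gives $\bar b_2$ with
\[
[(\pi(a)-\varepsilon/8)_+]\leq N[\bar b_2]\leq m_2[\pi(a)].
\]
Lift $\bar b_2$ to a positive element $b_2\in{\rm M}_\infty(A)_+$. We then apply Lemma \ref{lem:3.1} to both inequalities, with multiplicities $(1,N)$ and $(N,m_2)$, and with the same large $k$; this is possible because each condition holds for all sufficiently large $k$. Put $b_2'=((1-e_k)^{\frac12}b_2(1-e_k)^{\frac12}-\varepsilon')_+$. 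We obtain
\[
[((1-e_k)^{\frac12}a(1-e_k)^{\frac12}-\varepsilon)_+]\leq N[b_2'], \qquad N[b_2']\leq m_2[a].
\]
Finally set $b=b_1\oplus b_2'$. Then
\[
[(a-3\varepsilon)_+]\leq N[b_1]+N[b_2']=N[b]\leq (m_1+m_2)[a].
\]

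\emph{Main obstacle.} The hard step is matching the tolerances in the quotient piece. Lemma \ref{lem:3.1} produces a cut-down $-\varepsilon$ on the smaller side and $-\varepsilon/4$ on the larger side. In the first inequality the element $b_2'$ sits on the larger side, while in the second it sits on the smaller side, and it must be the same element in both. I would handle this by applying Lemma \ref{lem:3.1} with a chain of tolerances $\varepsilon'<\varepsilon''$ and using the monotonicity $(c-\varepsilon'')_+\precsim(c-\varepsilon')_+$. If needed, I would fall back on a direct argument: use $\|\pi(x)\|=\lim_k\|(1-e_k)x\|$ together with the standard perturbation lemma, choosing $\varepsilon'$ small enough relative to $\varepsilon$, so that both inequalities hold for one truncation of $b_2$. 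The same bookkeeping, applied via Lemma \ref{lem:3.2}, justifies taking a common $k$ for the ideal piece.
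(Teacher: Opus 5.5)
Your proposal is essentially the paper's proof. It uses the same quasi-central splitting of $a$, applies weak $m_1$-divisibility directly to $e_k^{1/2}ae_k^{1/2}$ inside $I$ once $k$ is fixed, applies Lemma~\ref{lem:3.1} to both quotient inequalities with nested tolerances (the paper's $16\varepsilon$, $4\varepsilon$, $\varepsilon$ is exactly your chain, with $b_2'$ cut at the middle level), and sets $b=b_1\oplus b_2'$. Of your two ways of matching tolerances, rely on the direct one: the fixed ratio $\varepsilon/4$ in Lemma~\ref{lem:3.1} cannot hold in general (for $I=0$, $A=\mathbb{C}$, $a=1$, $b=\varepsilon/8$, $\varepsilon<1$ it would give $1-\varepsilon\precsim 0$), so you should fix first a $v$ over $A$ with $\|\pi(v)(\pi(b_2)\otimes 1_N)\pi(v)^*-\pi((a-\varepsilon/8)_+)\|<\varepsilon/8$, then $\varepsilon'$ with $\|v\|^2\varepsilon'<\varepsilon/8$, and only then $k$ --- which is what your fallback does once `small relative to $\varepsilon$' is read as `small relative to $\varepsilon$ and $\|v\|$'.
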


\begin{proof}By Theorem \ref{thm:2.18}, for any $\varepsilon>0$ any $x\in {\rm M}_\infty(A)_+$ and any integer  $n>0$, we need to show that there exist  $y\in (A\otimes \mathcal{K})_+$
$$[(x-\varepsilon)_+]\leq n[y]\leq (m_1+m_2)[x].$$

Since  $\pi:A\to A/I$  is  the quotient map, then $\pi(x)\in(A/I)_+$. Since $A/I$ has  $m_2$-divisible  property, for fixed  $n\in\mathbb{N}$, there exists $y\in M_\infty(A)_+$ satisfying
$$[\pi(x)]\leq  n[\pi (y)]\leq m_2[\pi(x)].
$$

Let $\{e_k\}\subset I$ be a quasi-central approximate identity of $A$. Applying Lemma \ref{lem:3.1},  there exists a sufficiently large integer $k$ such that:

$$[((1-e_k)^{\frac12}x(1-e_k)^{\frac12}x-16\varepsilon)_+]\leq n[((1-e_k)^{\frac12}y(1-e_k)^{\frac12}-4\varepsilon)_+)]$$
$$\leq  m_2[((1-e_k)^{\frac12}x(1-e_k)^{\frac12}-\varepsilon)_+].$$

Note that $(e_k^{\frac12}xe_k^{\frac12}-\varepsilon)_+\in I$. From the $m_1$-divisible property of $I$, there exists $y'\in I$ with
\[
[(e_k^{\frac12}xe_k^{\frac12}-2\varepsilon)_+]\leq n[y']\leq m_1[\big(e_k^{\frac12}xe_k^{\frac12}-\varepsilon)_+].
\]

Define
$$
y:=y'\oplus ((1-e_k)^{\frac12}y(1-e_k)^{\frac12}-4\varepsilon)_+.
$$
We use the standard decomposition inequality for positive elements:
\[
(x-20\varepsilon)_+
\precsim ((1-e_k)^{\frac12}x(1-e_k)^{\frac12}-16\varepsilon)_+
+(e_k^{\frac12}xe_k^{\frac12}-2\varepsilon)_+.
\]

Therefore, one has
$$[(x-20\varepsilon)_+]\leq[(e_k^{\frac12}xe_k^{\frac12}-2\varepsilon)_+]+ [((1-e_k)^{\frac12}x(1-e_k)^{\frac12}x-16\varepsilon)_+]$$
$$\leq  n[y']+n[\big((1-e_k)^{\frac12}y(1-e_k)^{\frac12}-4\varepsilon)_+]$$$$=n[y]
\leq m_1[(e_k^{\frac12}xe_k^{\frac12}-\varepsilon)_+]
+m_2[((1-e_k)^{\frac12}x(1-e_k)^{\frac12}-\varepsilon)_+]$$$$
\leq m_1[x]+m_2[x]=(m_1+m_2)[x].$$

 Therefore, $A$ has the $(m_1+m_2)$-divisible comparison property.

\end{proof}

The following theorem was originally established by Perera,  Thiel, and Vilalta  in \cite{PTV},  using  the preceding  theorems, we provide an alternative  proof of  this result.
	
\begin{theorem}\label{thm:3.8}  Let $
  0 \xrightarrow{}I \xrightarrow{
 } A \xrightarrow{ }A/I
 \xrightarrow{} 0$ be an extension of ${\rm C^*}$-algebras. Then $A$ is pure if and only if $I$ and the quotient $A/I$ are pure.
 \end{theorem}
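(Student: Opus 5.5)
The plan is to prove the two directions separately, using the characterization of pureness in Theorem \ref{thm:2.15} together with the permanence results just established (Theorems \ref{thm:3.4}, \ref{thm:3.7} and \ref{thm:2.14}).

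\emph{Necessity.} Assume $A$ is pure, i.e.\ ${\rm Cu}(A)$ is almost unperforated ($0$-comparison) and almost divisible. For the quotient, I will use the fact that ${\rm Cu}(A/I)$ is the quotient of ${\rm Cu}(A)$ by the ideal ${\rm Cu}(I)$: one has $x\le y$ in ${\rm Cu}(A/I)$ iff $\tilde x\le \tilde y+z$ for lifts and some $z\in{\rm Cu}(I)$ that can be taken to be ``$\infty$'' on $I$. Almost unperforation and almost divisibility both pass to such quotients by lifting the relevant inequalities, since divisors $b$ found in $A$ map to divisors in $A/I$. For the ideal, ${\rm Cu}(I)$ is an order-hereditary ideal of ${\rm Cu}(A)$. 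Comparison inequalities in ${\rm Cu}(I)$ are therefore the same as in ${\rm Cu}(A)$. A divisor $b$ with $kb\le a$ and $a\in{\rm Cu}(I)$ automatically lies in ${\rm Cu}(I)$, because ${\rm Cu}(I)$ is downward closed. So pureness passes to $I$ directly. This direction is routine.

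\emph{Sufficiency.} Assume $I$ and $A/I$ are pure. By Theorem \ref{thm:3.4} with $m=n=0$, $A$ has $1$-comparison. Next, almost divisibility gives weak divisibility: given $x'\ll x$ and $N$, take $b$ with $Nb\le x$ and $x'\le (N+1)b$. Then $x'\le (N+1)b\le 2Nb\le 2x$, and $y=2b$ satisfies $x'\le Ny\le 2x$. Hence by Theorem \ref{thm:2.17}, $I$ and $A/I$ are weak $2$-divisible, and Theorem \ref{thm:3.7} yields that $A$ is weak $4$-divisible (with $M=4$). The hypotheses of Theorem \ref{thm:2.15} for $A$ are thus satisfied with $m=1$ and $M=4$. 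Pure algebras have the Global Glimm Property, since almost divisibility gives $(2,\omega)$-divisibility (Theorem \ref{thm:2.13}). By Theorem \ref{thm:2.14}, $A$ therefore has the Global Glimm Property, and the implication (1)$\Rightarrow$(3) of Theorem \ref{thm:2.15} shows that $A$ is pure.

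The main obstacle is the step from almost divisibility to $(2,\omega)$-divisibility, which is needed for the Global Glimm Property. Given $u'\ll u$, almost divisibility with $k=2$ gives $x$ with $2x\le u$ and $u'\le 3x$. This is exactly $(2,\omega)$-divisibility with $n=3$, so it works. Two other points need care. First, the passage from the ``$\ll$'' formulation in Theorem \ref{thm:2.15} to the ``$\le$'' formulation; I will handle this through Theorem \ref{thm:2.17}, inserting an intermediate $x'\ll x''\ll x$. Second, the check that weak $M$-divisibility in the sense of Definition \ref{def:2.16} is exactly the hypothesis of Theorem \ref{thm:2.15}.
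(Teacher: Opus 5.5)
Your sufficiency argument is the paper's argument: Theorem \ref{thm:3.4} with $m=n=0$ gives $1$-comparison, and almost divisibility of $I$ and $A/I$ gives weak $2$-divisibility, hence weak $4$-divisibility of $A$ by Theorem \ref{thm:3.7}; $(2,\omega)$-divisibility with Theorems \ref{thm:2.13} and \ref{thm:2.14} gives the Global Glimm Property of $A$, and Theorem \ref{thm:2.15} concludes. The paper's proof never treats the converse, so your necessity argument (via ${\rm Cu}(A/I)\cong{\rm Cu}(A)/{\rm Cu}(I)$ with the idempotent largest element of ${\rm Cu}(I)$, and downward-closedness of ${\rm Cu}(I)$ in ${\rm Cu}(A)$) is a correct supplement rather than a departure; note only that the $\le$-to-$\ll$ passage needs two interpolants $x'\ll x''\ll x'''\ll x$, not one.
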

\begin{proof} By Theorem \ref{thm:2.15},  a C*-algebra is pure provided it satisfies three condition: it has the  $m$-comparison property for any integer $m$, it is  weakly $n$-divisible for every nonzero integer $n$, and it possesses the Global Glimm Property. We verify these properties for $A$ via the purity of $I$ and $A/I$.

First, since $I$ and $A/I$ are pure,  Theorem \ref{thm:3.4} implies that $A$ has $1$-comparison property.

 Second, as pure C*-algebras  $I$ and $A/I$ are both  $(2,\omega)$-divisible. By Theorem \ref{thm:2.13},  $I$ and $A/I$ enjoy the   Global Glimm Property, which further yields that $A$  has the Global Glimm Property by Theorem \ref{thm:2.14}.

   Third, the purity of  $I$ and $A/I$ also guarantees that both $I$ and $A/I$  are  almost divisible, hence  weakly $m$-divisible for any integer $m\geq 2$. Combining this fact with  Theorem \ref{thm:3.7},  $A$ has  weakly $2m$-divisible.
\end{proof}

\end{document}